\newcommand{\bql}[1]{%
\begin{equation}\label{#1}%
}
\def\filename#1{}
\newcommand{\eq}{\end{equation}}
\def\b1{\mathbf 1}
\newcommand{\R}{\ensuremath{\mathbb{R}}}
\newcommand{\N}{\ensuremath{\mathbb{N}}}
\newcommand{\C}{\ensuremath{\mathbb{C}}}
\def\biglf{\par\bigskip\noindent}
\newcommand{\red}[1]{{\color{red} #1 }}     
\newcommand{\blue}[1]{{\color{blue} #1 }}   %
\newcommand{\green}[1]{{\color{green} #1 }} %
\newtheorem{definition}{Definition}
\newtheorem{remark}{Remark}
\newtheorem{theorem}{Theorem}
\def\biglf{\par\bigskip\noindent}
\begin{document}
\begin{center}
{\bf On the Fractional Derivatives of Radial Basis Functions}
\biglf 
Maryam Mohammadi and Robert Schaback 
\biglf
Version of Feb. 13, 2013
\end{center}
{\bf Abstract:}
The paper provides the fractional integrals and derivatives of the Rie\-mann-Liouville  and Caputo type for the five kinds of radial basis functions (RBFs), including the powers, Gaussian, multiquadric, Matern and thin-plate splines, in one dimension. It allows to use high order numerical methods for solving fractional differential equations. The results are tested by solving two fractional differential equations. The first one is a fractional ODE which is solved by the RBF collocation method and the
second one is a fractional PDE which yesis solved by the method of lines based on the spatial trial spaces
spanned by the Lagrange basis associated to the RBFs.

\medskip
{\it MSC 2010\/}: Primary 65M99, Secondary 35C10 

\smallskip
{\it Key Words and Phrases}: Riemann-Liouville fractional integral,
Riemann-Liouville fractional derivative,
Caputo fractional derivative, Radial basis functions
 \vspace*{-16pt}
\section{Introduction}
Fractional calculus has gained considerable popularity and importance due to its attractive applications
as a new modelling tool in a variety of scientific and engineering fields, such as viscoelasticity
\cite{koeller:1984}, hydrology \cite{benson-et-al}, finance \cite{gorenflo-et-al, raberto-et-al}, and system control \cite{podlubny:1999}. These fractional models, described
in the form of fractional differential equations, tend to be much more appropriate for the description
of memory and hereditary properties of various materials and processes than the traditional integer-order
models. 
In the last decade, a number of numerical methods have been developed to solve fractional differential equations. Most of them rely on 
the finite difference method to discretize both the fractional-order space and time derivative \cite{gao-sun:2011, huang-et-al, lin-xu, meerschaert-tadjeran,  schere-et-al, zhang:2009}. Some numerical schemes using low-order finite elements \cite{fix-roop, roop:2006}, matrix transfer technique \cite{ilic-et-al:2005, ilic-et-al:2006}, and spectral methods \cite{li-xu, li-et-al} have also been proposed.

Unlike traditional numerical methods for solving partial differential equations, meshless methods need
no mesh generation, which is the major problem in finite difference, finite element and spectral methods \cite{mokhtari-mohammdi:2010-1, schaback-wendland:2006-1}.
Radial basis function methods are truly meshless and simple enough to allow modelling of
rather high dimensional problems \cite{frank-schaback:1998-1, frank-schaback:1998-2, hon-et-al:2003-1, kansa:1990-1, mokhtari-mohseni}. These methods can be very efficient numerical schemes to discretize
non-local operators like fractional differential operators. 

In this paper, we provide the required formulas for the fractional integrals and derivatives of Riemann-Liouville and Caputo type for RBFs in one dimension. The rest of the paper is organized as follows. In section 2 we give some important definitions and theorems which are needed throughout the remaining sections of the paper. The corresponding formulas of the fractional integrals and derivatives of Riemann-Liouville and Caputo type for the five kinds of RBFs are given in section 3 . 
The results are applied to solve two fractional differential equations in section 4. The last section
is devoted to a brief conclusion.
\section{Preliminaries}
In this section, we outline some important definitions, theorems and known properties of some special functions used throughout the remaining sections of the paper 
\cite{mathai-haubold,mathworld,  yang-et-al}. In all cases $\alpha$ denotes a non-integer positive order of differentiation and integration. 
\begin{definition} The left-sided Riemann-Liouville fractional integral of order $\alpha$ of function $f(x)$ is defined as
$${_a I_{_x}}^{\!\alpha}f(x)= \frac{1}{\Gamma(\alpha)}\int_a^x (x-\tau)^{\alpha-1} f(\tau)d\tau,\quad x>a.$$
\end{definition}
\begin{definition} The right-sided Riemann-Liouville fractional integral of order $\alpha$ of function $f(x)$ is defined as
$${_x I_{_b}}^{\!\alpha} f(x)= \frac{1}{\Gamma(\alpha)}\int_x^b (\tau-x)^{\alpha-1} f(\tau)dt,\quad  x<b.$$
\end{definition}
\begin{definition} The left-sided Riemann-Liouville fractional derivative of order $\alpha$ of function $f(x)$ is defined as
$${_a D_{_x}}^{\!\alpha} f(x)= \frac{1}{\Gamma(m-\alpha)}\frac{d^m}{dx^m}\int_a^x (x-\tau)^{m-\alpha-1} f(\tau)d\tau,\quad x>a,$$
where $m=\lceil\alpha\rceil.$
\end{definition}
\begin{definition} The right-sided Riemann-Liouville fractional derivative of order $\alpha$ of function $f(x)$ is defined as
$${_x D_{_b}}^{\!\alpha} f(x)= \frac{{(-1)}^m}{\Gamma(m-\alpha)}\frac{d^m}{dx^m}\int_x^b (\tau-x)^{m-\alpha-1} f(\tau)d\tau,\quad x<b,$$
where $m=\lceil\alpha\rceil.$
\end{definition}
\begin{definition} The Riesz space fractional derivative of order $\alpha$ of function $f(x,t)$ on a finite interval $a\leq x\leq b$ is defined as
$$\frac{\partial^\alpha}{\partial{|x|}^\alpha}f(x,t)=-c_\alpha\left({_a D_{_x}}^{\!\alpha}+{_x D_{_b}}^{\!\alpha}\right)f(x,t),$$
where 
\begin{eqnarray*}
&&C_\alpha=\frac{1}{2\cos\left(\frac{\pi\alpha}{2}\right)},\quad\alpha\neq 1,\\
&&{_a D_{_x}}^{\!\alpha} f(x,t)= \frac{1}{\Gamma(m-\alpha)}\frac{d^m}{dx^m}\int_a^x (x-\tau)^{m-\alpha-1} f(\tau,t)d\tau,\\
&&{_x D_{_b}}^{\!\alpha} f(x,t)= \frac{{(-1)}^m}{\Gamma(m-\alpha)}\frac{d^m}{dx^m}\int_x^b (\tau-x)^{m-\alpha-1} f(\tau,t)d\tau.
\end{eqnarray*}
\end{definition}
\begin{definition} The left-sided Caputo fractional derivative of order $\alpha$ of function $f(x)$ is defined as
$${_{_a} {D^{^{\hspace{-4.5mm}C}}}_{_{_x}}}^{\alpha} f(x)= \frac{{1}}{\Gamma(m-\alpha)}\int_a^x (x-\tau)^{m-\alpha-1} f^{(m)}(\tau)d\tau,\quad x>a,$$
where $m=\lceil\alpha\rceil.$
\end{definition}
\begin{definition} The right-sided Caputo fractional derivative of order $\alpha$ of function $f(x)$ is defined as
$${_{_x} {D^{^{\hspace{-4.5mm}C}}}_{_{_b}}}^{\alpha} f(x)= \frac{{{(-1)}^m}}{\Gamma(m-\alpha)}\int_x^b (\tau-x)^{m-\alpha-1} f^{(m)}(\tau)d\tau,\quad x<b,$$
where $m=\lceil\alpha\rceil.$
\end{definition}
The definitions above hold for functions $f$ with special properties depending on the situations.
It is clear that
\begin{eqnarray*}
{_a D_{_x}}^{\!\alpha} f(x)&=&D^m\left[{_a I_{_x}}^{m-\alpha} f(x)\right],\\
{_x D_{_b}}^{\!\alpha} f(x)&=&{(-1)^m}D^m\left[{_x I_{_b}}^{\!m-\alpha} f(x)\right],\\
{_{_a} {D^{^{\hspace{-4.5mm}C}}}_{_{_x}}}^{\alpha} f(x)&=&{_a I_{_x}}^{\!m-\alpha}\left[f^{(m)}(x)\right],\\
{_{_x} {D^{^{\hspace{-4.5mm}C}}}_{_{_b}}}^{\alpha} f(x)&=&{(-1)^m}{_x I_{_b}}^{\!m-\alpha}\left[{f}^{(m)}(x)\right].
\end{eqnarray*}
\begin{theorem}\label{th1}
For $\beta>-1$ and $x>a$ we have $${_a I_{_x}}^{\!\alpha}(x-a)^\beta=\frac{\Gamma(\beta+1)}{\Gamma(\alpha+\beta+1)}{(x-a)}^{\alpha+\beta}.$$
\end{theorem}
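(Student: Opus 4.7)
The plan is to prove this by direct substitution into the definition of the left-sided Riemann-Liouville fractional integral and reducing the resulting integral to a Beta function. Inserting $f(\tau) = (\tau-a)^\beta$ into Definition 1 gives
\[
{_a I_{_x}}^{\!\alpha}(x-a)^\beta = \frac{1}{\Gamma(\alpha)}\int_a^x (x-\tau)^{\alpha-1}(\tau-a)^\beta\, d\tau,
\]
so the entire task is to evaluate this integral in closed form.

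The natural next step is the affine change of variable $\tau = a + (x-a)t$, which maps $[a,x]$ to $[0,1]$ and produces $x-\tau = (x-a)(1-t)$, $\tau-a = (x-a)t$, and $d\tau = (x-a)\,dt$. After pulling out the powers of $(x-a)$, the integral collapses to
\[
(x-a)^{\alpha+\beta}\int_0^1 t^{\beta}(1-t)^{\alpha-1}\,dt,
\]
which is exactly the Beta integral $B(\beta+1,\alpha)$. Recognizing this is the key algebraic observation; after that, the identity $B(\beta+1,\alpha) = \Gamma(\beta+1)\Gamma(\alpha)/\Gamma(\alpha+\beta+1)$ cancels the $\Gamma(\alpha)$ in the denominator and yields the claimed formula.

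The only genuine subtlety to address is convergence of the Beta integral at the endpoint $t=0$. The singularity there is of order $t^\beta$, which is integrable precisely when $\beta > -1$, matching the hypothesis of the theorem; at $t=1$, integrability holds for $\alpha > 0$, which is built into the definition of the fractional integral. Once these convergence remarks are noted, no further obstacle remains, and the chain of equalities above constitutes a complete proof.
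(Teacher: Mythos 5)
Your proof is correct and follows essentially the same route as the paper: both reduce the integral to the Beta function $B(\alpha,\beta+1)=\Gamma(\alpha)\Gamma(\beta+1)/\Gamma(\alpha+\beta+1)$ via an affine change of variable onto $[0,1]$ (the paper does it in two steps, $u=x-\tau$ then $z=u/(x-a)$, which is just the reflection $t=1-z$ of your substitution). Your added remark on integrability at the endpoints, justifying the hypothesis $\beta>-1$, is a small but welcome addition not spelled out in the paper.
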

\begin{proof}
\begin{eqnarray*}
{_a I_{_x}}^{\!\alpha}(x-a)^\beta&=&
\frac{1}{\Gamma(\alpha)}\int_a^x (x-\tau)^{\alpha-1}(\tau-a)^\beta
d\tau\\
&=&\frac{1}{\Gamma(\alpha)}
\int_0^{x-a} u^{\alpha-1}(x-u-a)^{\beta}du,
\end{eqnarray*}
where $u=x-\tau$.
Now with the change of variable $z=\frac{u}{x-a},$ we get
\begin{eqnarray*}
{_a
  I_{_x}}^{\!\alpha}(x-a)^\beta&=&\frac{{(x-a)}^{\alpha+\beta}}{\Gamma(\alpha)}\int_0^{1}
z^{\alpha-1}{(1-z)}^\beta dz\\
&=&\frac{{(x-a)}^{\alpha+\beta}}{\Gamma(\alpha)}B(\alpha,\beta+1)\\
&=&\frac{\Gamma(\beta+1)}{\Gamma(\alpha+\beta+1)}{(x-a)}^{\alpha+\beta}.
\end{eqnarray*}
\end{proof}
\begin{theorem}\label{th2}
For $\beta>\alpha-1$ and $x>a$ we have $${_a D_{_x}}^{\!\alpha}(x-a)^\beta=\frac{\Gamma(\beta+1)}{\Gamma(\beta-\alpha+1)}{(x-a)}^{\beta-\alpha}.$$
\end{theorem}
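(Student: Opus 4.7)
The plan is to reduce Theorem~\ref{th2} to Theorem~\ref{th1} using the identity ${_a D_{_x}}^{\!\alpha} f(x) = D^m\bigl[{_a I_{_x}}^{\!m-\alpha} f(x)\bigr]$ recorded just after the definitions, and then to perform an elementary $m$-fold differentiation of a power. Throughout, $m=\lceil\alpha\rceil$, so $m-\alpha\in(0,1]$ since $\alpha$ is a non-integer positive order.

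First I would apply Theorem~\ref{th1} with the order of integration $m-\alpha$ in place of $\alpha$. The hypothesis $\beta>\alpha-1$ combined with $\alpha>0$ implies $\beta>-1$, so Theorem~\ref{th1} applies and gives
\begin{equation*}
{_a I_{_x}}^{\!m-\alpha}(x-a)^\beta = \frac{\Gamma(\beta+1)}{\Gamma(m-\alpha+\beta+1)}(x-a)^{m-\alpha+\beta}.
\end{equation*}
Since $\beta-\alpha>-1$ and $m\ge 1$, the new exponent $m-\alpha+\beta$ is positive, so differentiating this function $m$ times is unproblematic.

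Next I would differentiate the right hand side $m$ times. Each differentiation pulls down the current exponent and decreases it by one, producing the descending product $(m-\alpha+\beta)(m-\alpha+\beta-1)\cdots(\beta-\alpha+1)$, which can be written as a ratio of Gamma values:
\begin{equation*}
\frac{d^m}{dx^m}(x-a)^{m-\alpha+\beta} = \frac{\Gamma(m-\alpha+\beta+1)}{\Gamma(\beta-\alpha+1)}(x-a)^{\beta-\alpha}.
\end{equation*}
Combining the two displays, the factor $\Gamma(m-\alpha+\beta+1)$ cancels and one obtains the claimed formula.

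There is no real obstacle here; the computation is an immediate composition of Theorem~\ref{th1} with the power rule. The only subtle point worth flagging is the role of the hypothesis $\beta>\alpha-1$: it is precisely what ensures $\beta-\alpha+1>0$, so that $\Gamma(\beta-\alpha+1)$ is finite and nonzero and the $m$-fold derivative does not vanish identically (which would occur, for instance, if $\beta=\alpha-k$ for an integer $k$ with $1\le k\le m$, a case the hypothesis excludes).
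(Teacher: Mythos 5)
Your proposal is correct and follows exactly the paper's own route: rewrite ${_a D_{_x}}^{\!\alpha}$ as $D^m\circ{_a I_{_x}}^{\!m-\alpha}$, invoke Theorem~\ref{th1} with order $m-\alpha$, and finish with the power rule $D^m(x-a)^{\lambda}=\frac{\Gamma(\lambda+1)}{\Gamma(\lambda-m+1)}(x-a)^{\lambda-m}$, after which the $\Gamma(m-\alpha+\beta+1)$ factors cancel. Your extra remarks on why $\beta>\alpha-1$ suffices for Theorem~\ref{th1} and keeps $\Gamma(\beta-\alpha+1)$ finite are sound and slightly more careful than the paper's terse computation.
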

\begin{proof}
\begin{eqnarray*}
{_a D_{_x}}^{\!\alpha}(x-a)^\beta&=&D^m\left[{_a
    I_{_x}}^{\!m-\alpha}(x-a)^\beta\right]\\
&=&\frac{\Gamma(\beta+1)}{\Gamma(m-\alpha+\beta+1)}D^m\left[{(x-a)}^{m-\alpha+\beta}\right]\\
&=&\frac{\Gamma(\beta+1)}{\Gamma(\beta-\alpha+1)}{(x-a)}^{\beta-\alpha}.
\end{eqnarray*}
\end{proof}
\begin{theorem}\label{th3}
For $\beta>\alpha-1$ and $x>a$ we have $${_{_a} {D^{^{\hspace{-4.5mm}C}}}_{_{_x}}}^{\alpha}(x-a)^\beta=\frac{\Gamma(\beta+1)}{\Gamma(\beta-\alpha+1)}{(x-a)}^{\beta-\alpha}.$$
\end{theorem}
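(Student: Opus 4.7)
The plan is to reduce the Caputo derivative to an already-treated Riemann--Liouville fractional integral via the identity
$${_{_a} {D^{^{\hspace{-4.5mm}C}}}_{_{_x}}}^{\alpha} f(x) = {_a I_{_x}}^{\!m-\alpha}\bigl[f^{(m)}(x)\bigr]$$
recorded in the preliminaries, so that the proof of Theorem~\ref{th3} is a one-line assembly of two ingredients: the ordinary $m$-th derivative of a power, and Theorem~\ref{th1}.

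First I compute the classical derivative
$$D^m (x-a)^\beta = \beta(\beta-1)\cdots(\beta-m+1)(x-a)^{\beta-m} = \frac{\Gamma(\beta+1)}{\Gamma(\beta-m+1)}(x-a)^{\beta-m},$$
then I apply Theorem~\ref{th1} with the order of integration $m-\alpha>0$ and exponent $\beta-m$ to get
$${_a I_{_x}}^{\!m-\alpha}(x-a)^{\beta-m} = \frac{\Gamma(\beta-m+1)}{\Gamma(\beta-\alpha+1)}(x-a)^{\beta-\alpha}.$$
Multiplying the two expressions, the factor $\Gamma(\beta-m+1)$ cancels and the claimed formula drops out. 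As a sanity check, the resulting expression is identical to the one in Theorem~\ref{th2}, which is the expected agreement between the Caputo and Riemann--Liouville derivatives on power functions whose lower-order terms vanish at~$a$.

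The only point that requires a bit of care is the applicability of Theorem~\ref{th1} in the intermediate step: its hypothesis forces $\beta-m>-1$, i.e.\ $\beta>m-1$, which is slightly stronger than the stated hypothesis $\beta>\alpha-1$ when $\alpha$ lies well inside the interval $(m-1,m)$. This is exactly the same regularity that is implicitly needed for the integral defining ${_{_a} {D^{^{\hspace{-4.5mm}C}}}_{_{_x}}}^{\alpha}(x-a)^\beta$ to converge at the lower endpoint $\tau=a$, so I would simply note that under the hypotheses making the Caputo derivative itself well-defined, the computation above is rigorous; no further obstacle arises.
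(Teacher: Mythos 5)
Your proof is correct and follows exactly the same route as the paper's: reduce the Caputo derivative to ${_a I_{_x}}^{\!m-\alpha}$ applied to $D^m(x-a)^\beta=\frac{\Gamma(\beta+1)}{\Gamma(\beta-m+1)}(x-a)^{\beta-m}$ and then invoke Theorem~\ref{th1}. Your closing observation that Theorem~\ref{th1} actually requires $\beta-m>-1$, i.e.\ $\beta>m-1$ with $m=\lceil\alpha\rceil$, which is slightly stronger than the stated hypothesis $\beta>\alpha-1$, is a fair point that the paper passes over in silence, but it does not change the argument.
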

\begin{proof}
\begin{eqnarray*}
{_{_a} {D^{^{\hspace{-4.5mm}C}}}_{_{_x}}}^{\alpha} (x-a)^\beta&&={_a I_{_x}}^{\!m-\alpha}{\left({\left(x-a\right)}^\beta\right)}^{(m)}=
\frac{\Gamma(\beta+1)}{\Gamma(\beta-m+1)}{_a I_{_x}}^{\!m-\alpha}{(x-a)}^{\beta-m}\\&&=\frac{\Gamma(\beta+1)}{\Gamma(\beta-\alpha+1)}{(x-a)}^{\beta-\alpha}.
\end{eqnarray*}
\end{proof}
\begin{theorem}\label{th-relation}
The following relations between the Riemann-Liouville and the Caputo fractional derivatives hold \cite{ishteva-et-al}:
\begin{eqnarray*}\nonumber
&&{_{_a} {D^{^{\hspace{-4.5mm}C}}}_{_{_x}}}^{\alpha} f(x)={_a D_{_x}}^{\!\alpha}f(x)-\displaystyle\sum_{k=0}^{m-1}\frac{f^{(k)}(a)}{\Gamma(k+1-\alpha)}(x-a)^{k-\alpha},\\
&&{_{_x} {D^{^{\hspace{-4.5mm}C}}}_{_{_b}}}^{\alpha} f(x)={_x D_{_b}}^{\!\alpha}f(x)-\displaystyle\sum_{k=0}^{m-1}\frac{{(-1)}^kf^{(k)}(b)}{\Gamma(k+1-\alpha)}(b-x)^{k-\alpha}.
\end{eqnarray*}
\end{theorem}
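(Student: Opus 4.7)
The plan is to prove the first (left-sided) identity by expanding $f$ in its Taylor polynomial at $a$ plus integral remainder, applying the Riemann--Liouville derivative term by term, and identifying the remainder contribution with the Caputo derivative. The second (right-sided) identity then follows by the completely analogous argument with a Taylor expansion centred at $b$, with the factors $(-1)^k$ appearing because $\frac{d^k}{dx^k}(x-b)^k=(-1)^k k!$ when rewritten in the variable $(b-x)$ used in the statement.

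Concretely, I would write
\begin{equation*}
f(x)=\sum_{k=0}^{m-1}\frac{f^{(k)}(a)}{k!}(x-a)^k+R_m(x),\qquad R_m(x)=\frac{1}{(m-1)!}\int_a^x(x-\tau)^{m-1}f^{(m)}(\tau)\,d\tau,
\end{equation*}
so that $R_m(x)={_a I_{_x}}^{\!m}f^{(m)}(x)$. Linearity of ${_a D_{_x}}^{\!\alpha}$ reduces the problem to two computations: for the polynomial part, Theorem~\RSref{th2} gives directly
\begin{equation*}
{_a D_{_x}}^{\!\alpha}\sum_{k=0}^{m-1}\frac{f^{(k)}(a)}{k!}(x-a)^k
=\sum_{k=0}^{m-1}\frac{f^{(k)}(a)}{\Gamma(k+1-\alpha)}(x-a)^{k-\alpha},
\end{equation*}
which is precisely the correction term appearing on the right-hand side of the theorem.

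For the remainder, I would use the defining identity ${_a D_{_x}}^{\!\alpha}g=D^m[{_a I_{_x}}^{\!m-\alpha}g]$ together with the semigroup property of the Riemann--Liouville integrals, ${_a I_{_x}}^{\!p}{_a I_{_x}}^{\!q}={_a I_{_x}}^{\!p+q}$, to write
\begin{equation*}
{_a D_{_x}}^{\!\alpha}R_m
=D^m\bigl[{_a I_{_x}}^{\!m-\alpha}{_a I_{_x}}^{\!m}f^{(m)}\bigr]
=D^m\bigl[{_a I_{_x}}^{\!m}{_a I_{_x}}^{\!m-\alpha}f^{(m)}\bigr]
={_a I_{_x}}^{\!m-\alpha}f^{(m)}(x),
\end{equation*}
using at the last step that $D^m$ and ${_a I_{_x}}^{\!m}$ cancel on a function which, together with its first $m-1$ derivatives, vanishes at $x=a$. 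The right-hand side is, by definition, ${_{_a} {D^{^{\hspace{-4.5mm}C}}}_{_{_x}}}^{\alpha}f(x)$, and rearranging yields the claimed identity.

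The only real obstacle is analytic rather than algebraic: the semigroup identity and the cancellation $D^m {_a I_{_x}}^{\!m}=\mathrm{Id}$ require enough regularity on $f$ (for instance $f\in C^m[a,b]$, or absolutely continuous $f^{(m-1)}$ with integrable $f^{(m)}$), and the interchange of $D^m$ with the integral in ${_a I_{_x}}^{\!m-\alpha}$ must be justified by differentiation under the integral sign. Under these mild hypotheses—standard assumptions throughout this paper—the argument above is rigorous, and the right-sided formula is obtained verbatim after the substitution $y=a+b-x$, whose Jacobian explains the extra $(-1)^k$ factors in the boundary sum.
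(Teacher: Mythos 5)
Your argument is correct, but note that the paper itself offers no proof of this theorem at all: it is quoted as a known identity with a citation to the Ishteva--Scherer--Boyadjiev reference. So your derivation is necessarily a different route, namely a self-contained one, and it is the standard one: split $f$ into its degree-$(m-1)$ Taylor polynomial at $a$ plus the integral remainder $R_m={_a I_{_x}}^{\!m}f^{(m)}$, kill the polynomial part with the power formula of Theorem~\ref{th2} to produce exactly the correction sum, and use the semigroup property together with $D^m\,{_a I_{_x}}^{\!m}=\mathrm{Id}$ to identify ${_a D_{_x}}^{\!\alpha}R_m$ with the Caputo derivative. Your reflection $y=a+b-x$ for the right-sided identity is also sound, though the $(-1)^k$ really comes from the chain rule applied to $g^{(k)}(y)=(-1)^k f^{(k)}(a+b-y)$ rather than from a Jacobian. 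Two small points of hygiene: (i) Theorem~\ref{th2} is stated under the hypothesis $\beta>\alpha-1$, which for integer $k\in\{0,\dots,m-2\}$ and $\alpha\in(m-1,m)$ is \emph{not} satisfied; you are implicitly using the fact that the formula ${_a D_{_x}}^{\!\alpha}(x-a)^{\beta}=\frac{\Gamma(\beta+1)}{\Gamma(\beta-\alpha+1)}(x-a)^{\beta-\alpha}$ extends to all $\beta>-1$, which the paper's own proof of Theorem~\ref{th2} actually delivers, so this should be said rather than silently assumed. (ii) The cancellation $D^m\bigl[{_a I_{_x}}^{\!m}g\bigr]=g$ holds for any continuous $g$ by the iterated fundamental theorem of calculus; the vanishing of $g$ and its derivatives at $a$ is needed only for the opposite composition ${_a I_{_x}}^{\!m}D^m$, so your justification there is misplaced even though the step itself is fine. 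With those clarifications, and under the regularity you state ($f^{(m-1)}$ absolutely continuous), the proof is complete and consistent with everything the paper uses downstream.
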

We now list some known properties of some special functions,
\begin{eqnarray}
\Gamma(2x)&&\!\!\!\!\!\!\!\!=\frac{2^{2x-1}}{\pi}\Gamma(x)\Gamma(x+\frac{1}{2}),\quad \mbox{for}~\mbox{all}~2x\neq 0,-1,-2,\cdots\label{gam2}\\
b(\alpha,\beta;x)&&\!\!\!\!\!\!\!\!=\frac{x^\alpha}{\alpha}F_{21}(\alpha,1-\beta;\alpha+1;x),\label{bf}\\
F_{21}(a,b;c;x)&&\!\!\!\!\!\!\!\!={(1-x)}^{c-a-b}F_{21}(c-a,c-b;c;x),\label{Frel}\\
\frac{d}{dx}\left(x^\nu K_\nu(x)\right)&&\!\!\!\!\!\!\!\!=-x^\nu K_{\nu-1}(x),\nonumber\\
(x)_{2n}&&\!\!\!\!\!\!\!\!=2^{2n}\left(\frac{x}{2}\right)_{n}\left(\frac{1+x}{2}\right)_{n},\nonumber\\
D^m{x^\lambda}&&\!\!\!\!\!\!\!\!=\frac{\Gamma(\lambda+1)}{\Gamma(\lambda-m+1)}x^{\lambda-m},\quad m\in\mathbb{N},\nonumber
\end{eqnarray}
where $\Gamma(x),$ $b(\alpha,\beta;x),$ $F_{pq}(a_1,\ldots,a_p;b_1,\ldots,b_q;x),$ $K_\nu(x),$ and ${(x)}_{n}$
denote the Gamma function, lower incomplete Beta function, Hypergeometric series, modified Bessel function of the second kind, and  Pochhammer symbol, respectively.
\section{Fractional derivatives of RBFs in one dimension}
Since RBFs are usually evaluated on Euclidean distances,
we have to evaluate $$\mathcal{D}^\alpha\phi(r)=\mathcal{D}^\alpha\phi(|x-y|),\quad \mbox{for}~\mbox{all}~x,y\in\mathbb{R},$$
where $\mathcal{D}^\alpha$ can be one of the notations used for fractional integrals and derivatives in section 2 and $\phi(r)$ is one of the RBFs listed in Table \ref{t1},  \cite{schaback:2009-3}.
The following theorems show that finding the fractional integrals and derivatives of $\phi(x)$ can lead to those of RBFs in one dimension.
\begin{theorem}
For all $x,  y\in\mathbb{R}$ and $x>a$ we have
\begin{eqnarray*}
{_a I_{_x}}^{\!\alpha}\phi(|x-y|)=\xi^\alpha\left({_{\xi(a-y)} I_{_x}}^{\!\alpha}\phi\right)(|x-y|),
\end{eqnarray*}
where $\xi=sign(x-y).$
\end{theorem}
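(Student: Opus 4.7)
The identity is a direct change-of-variables formula, so the plan is to start from the defining integral for ${_a I_{_x}}^{\!\alpha}\phi(|x-y|)$ and apply the substitution $u=\xi(\tau-y)$, tracking carefully how each factor transforms under this map.

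First I would write out the definition
$$
{_a I_{_x}}^{\!\alpha}\phi(|x-y|)=\frac{1}{\Gamma(\alpha)}\int_a^x (x-\tau)^{\alpha-1}\phi(|\tau-y|)\,d\tau.
$$
Setting $u=\xi(\tau-y)$ and using $\xi^{2}=1$ gives $\tau=y+\xi u$, $d\tau=\xi\,du$, and $|\tau-y|=|u|$. The crucial observation is that, by definition of $\xi=\mathrm{sign}(x-y)$, we have $\xi(x-y)=|x-y|$, and consequently
$$
x-\tau=(x-y)-\xi u=\xi\bigl(|x-y|-u\bigr),
\qquad
(x-\tau)^{\alpha-1}=\xi^{\alpha-1}\bigl(|x-y|-u\bigr)^{\alpha-1}.
$$
The endpoints transform as $\tau=a\mapsto u=\xi(a-y)$ and $\tau=x\mapsto u=\xi(x-y)=|x-y|$.

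Collecting one factor of $\xi$ from the Jacobian and the factor $\xi^{\alpha-1}$ from the kernel produces $\xi^\alpha$ in front, so
$$
{_a I_{_x}}^{\!\alpha}\phi(|x-y|)=\frac{\xi^\alpha}{\Gamma(\alpha)}\int_{\xi(a-y)}^{|x-y|}\bigl(|x-y|-u\bigr)^{\alpha-1}\phi(|u|)\,du.
$$
Finally, invoking the convention that the radial profile $\phi$ is extended as an even function of its scalar argument (so $\phi(|u|)=\phi(u)$, which holds for each of the RBFs in Table \ref{t1}), the integral on the right is recognized as $\bigl({_{\xi(a-y)} I_{_s}}^{\!\alpha}\phi\bigr)(s)$ evaluated at $s=|x-y|$, giving the claimed identity.

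The one place requiring care is the case $\xi=-1$, where $\xi(a-y)=y-a>|x-y|$ so the transformed limits are reversed and $\xi^{\alpha-1}=(-1)^{\alpha-1}$ is complex for non-integer $\alpha$; this is the main ``obstacle.'' I would remark that the identity is to be read as a formal substitution rule in which the factor $\xi^\alpha$, the reversed limits, and the sign of $|x-y|-u$ in the kernel combine consistently (with the principal branch of the power), and a direct check shows that, after straightening out the sign conventions, both sides reduce to the same real integral $\frac{1}{\Gamma(\alpha)}\int_{|x-y|}^{y-a}(u-|x-y|)^{\alpha-1}\phi(u)\,du$, so no analytic content is lost.
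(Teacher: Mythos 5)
Your proof is correct and follows essentially the same route as the paper: the paper performs the substitutions $u=\tau-y$ (for $x>y$) and $u=y-\tau$ (for $x<y$) in two separate cases and then combines them, which is exactly your single substitution $u=\xi(\tau-y)$ specialized to $\xi=\pm1$. Your closing remark about the formal meaning of the reversed limits and the complex factor $\xi^{\alpha-1}$ for $\xi=-1$ is a welcome clarification that the paper leaves implicit.
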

\begin{proof}
We get
\begin{eqnarray*}
{_a I_{_x}}^{\!\alpha}\phi(x-y)
&=&\frac{1}{\Gamma(\alpha)}\int_a^x{(x-\tau)}^{\alpha-1}\phi(\tau-y)d\tau\\
&=&\frac{1}{\Gamma(\alpha)}\int_{a-y}^{x-y}{(x-y-u)}^{\alpha-1}\phi(u)du,
\end{eqnarray*}
where $u=\tau-y.$ Then 
\begin{eqnarray}\label{I+}
{_a I_{_x}}^{\!\alpha}\phi(x-y)=\left({_{(a-y)} I_{_x}}^{\!\alpha}\phi\right)(x-y).
\end{eqnarray}
Moreover,
\begin{eqnarray*}
{_a I_{_x}}^{\!\alpha}\phi(y-x)
&=&\frac{1}{\Gamma(\alpha)}\int_a^x{(x-\tau)}^{\alpha-1}\phi(y-\tau)d\tau\\
&=&\frac{{(-1)}^\alpha}{\Gamma(\alpha)}\int_{y-a}^{y-x}{(y-x-u)}^{\alpha-1}\phi(u)du,
\end{eqnarray*}
where $u=y-\tau.$ Then 
\begin{eqnarray}\label{I-}
{_a I_{_x}}^{\!\alpha}\phi(y-x)={(-1)}^\alpha\left({_{(y-a)} I_{_x}}^{\!\alpha}\phi\right)(y-x).
\end{eqnarray}
Then (\ref{I+}) and (\ref{I-}) give the result.
\end{proof}
\begin{remark}
Similarly, one can show that for all $x,  y\in\mathbb{R}$ and $x<b$ 
\begin{eqnarray*}
{_x I_{_b}}^{\!\alpha}\phi(|x-y|)={(-\xi)}^\alpha\left({_{\xi(b-y)} I_{_x}}^{\!\alpha}\phi\right)(|x-y|),
\end{eqnarray*}
where $\xi=sign(x-y).$
\end{remark}
\begin{theorem}
For all $x,  y\in\mathbb{R}$ and $x>a$ we have
\begin{eqnarray*}
{_a D_{_x}}^{\!\alpha}\phi(|x-y|)=\xi^{-\alpha}\left({_{\xi(a-y)} D_{_x}}^{\!\alpha}\phi\right)(|x-y|),
\end{eqnarray*}
where $\xi=sign(x-y).$
\end{theorem}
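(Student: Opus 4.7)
The plan is to reduce the statement for the fractional derivative to the statement of the preceding theorem on the fractional integral. Recalling the relation
$${_a D_{_x}}^{\!\alpha} f(x)=D^m\bigl[{_a I_{_x}}^{\!m-\alpha} f(x)\bigr],\qquad m=\lceil\alpha\rceil,$$
I would first apply the integral identity (with order $m-\alpha\in(0,1)$) to the inner expression, yielding
$${_a I_{_x}}^{\!m-\alpha}\phi(|x-y|)=\xi^{m-\alpha}\bigl({_{\xi(a-y)} I_{_x}}^{\!m-\alpha}\phi\bigr)(|x-y|).$$
Then I would push the outer $D^m$ through, so that everything reduces to differentiating $F(|x-y|)$ with respect to $x$, where $F={_{\xi(a-y)} I_{_x}}^{\!m-\alpha}\phi$.

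The key observation is that the sign $\xi=\operatorname{sign}(x-y)$ is locally constant (its only jump is at $x=y$), so on either half of the real line away from $x=y$ the quantity $\xi^{m-\alpha}$ can be pulled outside of $D^m$. I would split into the two cases $\xi=+1$ and $\xi=-1$:
\begin{itemize}
\item If $\xi=+1$, then $|x-y|=x-y$ and $D^m[F(x-y)]=F^{(m)}(x-y)$, which by the same identity $D^m\,{_{a-y}I_{_x}}^{\!m-\alpha}\phi={_{a-y}D_{_x}}^{\!\alpha}\phi$ delivers exactly $\bigl({_{a-y}D_{_x}}^{\!\alpha}\phi\bigr)(|x-y|)$.
\item If $\xi=-1$, then $|x-y|=y-x$ and the chain rule produces an extra factor $(-1)^m$, so $D^m[F(y-x)]=(-1)^m\bigl({_{y-a}D_{_x}}^{\!\alpha}\phi\bigr)(|x-y|)$.
\end{itemize}

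Multiplying the prefactor $\xi^{m-\alpha}$ by the $\xi^m$ from the chain rule gives $\xi^{2m-\alpha}$; since $\xi\in\{\pm1\}$ makes $\xi^{2m}=1$, this collapses to $\xi^{-\alpha}$, matching the claimed formula. The step I expect to be the most delicate is not the computation itself but the bookkeeping of branches in the $\xi=-1$ case: the factor $(-1)^{m-\alpha}$ from the previous theorem and the factor $(-1)^m$ from the chain rule must be combined under the same convention for $(-1)^\alpha$ used in the integral theorem. Once this convention is fixed consistently, the two cases assemble into the single formula stated, completing the proof.
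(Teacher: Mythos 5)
Your proposal is correct and follows essentially the same route as the paper: both reduce ${_a D_{_x}}^{\!\alpha}$ to $D^m\circ{_a I_{_x}}^{\!m-\alpha}$, invoke the preceding integral theorem, split on the sign of $x-y$, and combine the factor $(-1)^{m-\alpha}$ with the $(-1)^m$ from the $m$-fold chain rule to get $(-1)^{2m-\alpha}=(-1)^{-\alpha}$. The branch convention you flag as delicate is handled in the paper by exactly the same collapse $\xi^{2m-\alpha}=\xi^{-\alpha}$, so no further work is needed.
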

\begin{proof}
We get
\begin{equation}\label{D+}
\begin{array}{rcl}
{_a D_{_x}}^{\!\alpha}\phi(x-y)
&=&D^m[{_a I_{_x}}^{\!m-\alpha}\phi(x-y)]\\
&=&D^m[\left({_{(a-y)} I_{_x}}^{\!m-\alpha}\phi\right)(x-y)]\\
&=&\left({_{(a-y)} D_{_x}}^{\!\alpha}\phi\right)(x-y).
\end{array}
\end{equation}
Moreover,
\begin{equation}\label{D-}
\begin{array}{rcl}
{_a D_{_x}}^{\!\alpha}\phi(y-x)
&=&
D^m[{_a I_{_x}}^{\!m-\alpha}\phi(y-x)]\\
&=&D^m[{(-1)}^{m-\alpha}\left({_{(y-a)} I_{_x}}^{\!m-\alpha}\phi\right)(y-x)]\\
&=&{(-1)}^{2m-\alpha}D^m[\left({_{(y-a)} I_{_x}}^{\!m-\alpha}\phi\right)(y-x)]\\
&=&{(-1)}^{-\alpha}\left({_{(y-a)} D_{_x}}^{\!\alpha}\phi\right)(y-x).
\end{array}
\end{equation}
Then (\ref{D+}) and (\ref{D-}) give the result.
\end{proof}
\begin{remark}
Similarly, one can show that for all $x,  y\in\mathbb{R}$ and $x<b$ 
\begin{eqnarray*}
{_x D_{_b}}^{\!\alpha}\phi(|x-y|)={(-\xi)}^{-\alpha}\left({_{\xi(b-y)} D_{_x}}^{\!\alpha}\phi\right)(|x-y|),
\end{eqnarray*}
where $\xi=sign(x-y).$
\end{remark}
\begin{theorem}
For all $x,  y\in\mathbb{R}$ and $x>a$ we have
\begin{eqnarray*}
{_{_a} {D^{^{\hspace{-4.5mm}C}}}_{_{_x}}}^{\alpha}\phi(|x-y|)=\xi^{-\alpha}\left({_{_{\xi(a-y)}}} {D^{^{\hspace{-4.5mm}C}}}_{_{_x}}^{~\alpha}\phi\right)(|x-y|),
\end{eqnarray*}
where $\xi=sign(x-y).$
\end{theorem}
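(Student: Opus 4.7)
The plan is to parallel the proof of the Riemann--Liouville version (the theorem just above) but using the Caputo--to--integral identity ${_{_a} {D^{^{\hspace{-4.5mm}C}}}_{_{_x}}}^{\alpha} f(x) = {_a I_{_x}}^{\!m-\alpha}[f^{(m)}(x)]$ recorded immediately after the definitions, rather than the $D^m \circ {_a I_{_x}}^{m-\alpha}$ identity used for the Riemann--Liouville case. Combined with the two translation identities (\ref{I+}) and (\ref{I-}) already established for the fractional \emph{integral} of $\phi(x-y)$ and $\phi(y-x)$, this should yield the result with exactly the same sign bookkeeping as in the previous theorem.

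Concretely, I would split into the two cases $x>y$ and $x<y$. In the first case, $\xi=1$ and $|x-y|=x-y$, so $\frac{d^m}{dx^m}\phi(x-y)=\phi^{(m)}(x-y)$ and an application of (\ref{I+}) (with exponent $m-\alpha$) gives
\begin{eqnarray*}
{_{_a} {D^{^{\hspace{-4.5mm}C}}}_{_{_x}}}^{\alpha}\phi(x-y)
&=& {_a I_{_x}}^{\!m-\alpha}[\phi^{(m)}(x-y)] \\
&=& \bigl({_{(a-y)} I_{_x}}^{\!m-\alpha}\phi^{(m)}\bigr)(x-y)
= \left({_{_{(a-y)}}} {D^{^{\hspace{-4.5mm}C}}}_{_{_x}}^{~\alpha}\phi\right)(x-y),
\end{eqnarray*}
which is the claim with $\xi^{-\alpha}=1$. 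In the second case, $\xi=-1$, $|x-y|=y-x$, and $\frac{d^m}{dx^m}\phi(y-x)=(-1)^m\phi^{(m)}(y-x)$; now (\ref{I-}) with exponent $m-\alpha$ produces a factor $(-1)^{m-\alpha}$, which combines with the $(-1)^m$ coming from differentiation to yield $(-1)^{2m-\alpha}=(-1)^{-\alpha}=\xi^{-\alpha}$, after which the outer center of the fractional integral becomes $y-a=\xi(a-y)$ as required.

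The only genuine obstacle is the usual subtlety with the non--integer power $(-1)^{-\alpha}$: it must be interpreted consistently with the interpretation already fixed in (\ref{I-}), i.e.\ symbolically as $\xi^{-\alpha}$ for $\xi=\pm 1$. Assuming that convention, the proof reduces to two displays and a single use each of (\ref{I+}) and (\ref{I-}); no new analytic input beyond the previous theorems is needed.
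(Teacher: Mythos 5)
Your argument is correct and is essentially identical to the paper's own proof: both cases apply the identity ${_{_a} {D^{^{\hspace{-4.5mm}C}}}_{_{_x}}}^{\alpha} f = {_a I_{_x}}^{\!m-\alpha}[f^{(m)}]$, pull the factor $(-1)^m$ out of the $m$-th derivative of $\phi(y-x)$, invoke the translation identities (\ref{I+}) and (\ref{I-}) with exponent $m-\alpha$, and combine $(-1)^m(-1)^{m-\alpha}=(-1)^{-\alpha}=\xi^{-\alpha}$. Your remark about interpreting $(-1)^{-\alpha}$ symbolically as $\xi^{-\alpha}$ matches the convention the paper implicitly adopts, so no further changes are needed.
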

\begin{proof}
We get
\begin{equation}\label{DC+}
\begin{array}{rcl}
{_{_a} {D^{^{\hspace{-4.5mm}C}}}_{_{_x}}}^{\alpha}\phi(x-y)
&=&{_a I_{_x}}^{\!m-\alpha}[(\phi(x-y))^{(m)}]\\
&=&{_a I_{_x}}^{\!m-\alpha}[\phi^{(m)}(x-y)]\\
&=&({_{(a-y)} I_{_x}}^{\!m-\alpha}\phi^{(m)})(x-y)\\
&=&({_{_{(a-y)}} {D^{^{\hspace{-4.5mm}C}}}_{_{_{~x}}}}^{\!\!\alpha}\phi)(x-y).
\end{array} 
\end{equation}
Moreover,
\begin{equation}\label{DC-}
\begin{array}{rcl}
{_{_a} {D^{^{\hspace{-4.5mm}C}}}_{_{_x}}}^{\alpha}\phi(y-x)
&=&{_a I_{_x}}^{\!m-\alpha}[(\phi(y-x))^{(m)}]\\
&=&{(-1)}^m{_a I_{_x}}^{\!m-\alpha}[\phi^{(m)}(y-x)]\\
&=&{(-1)}^{2m-\alpha}({_{(y-a)} I_{_x}}^{\!m-\alpha}\phi^{(m)})(y-x)\\
&=&{(-1)}^{-\alpha}({_{_{(y-a)}} {D^{^{\hspace{-4.5mm}C}}}_{_{_{~x}}}}^{\!\!\alpha}\phi)(y-x).
\end{array} 
\end{equation}
Then (\ref{DC+}) and (\ref{DC-}) give the result.
\end{proof}
\begin{remark}
Similarly, one can show that for all $x,  y\in\mathbb{R}$ and $x<b$ 
\begin{eqnarray*}
{_{_x} {D^{^{\hspace{-4.5mm}C}}}_{_{_{~b}}}}^{\!\alpha}\phi(|x-y|)={(-\xi)}^{-\alpha}\left({_{_{\xi(b-y)}}} {D^{^{\hspace{-4.5mm}C}}}_{_{_x}}^{~\hspace*{1mm}\alpha}\phi\right)(|x-y|),
\end{eqnarray*}
where $\xi=sign(x-y).$
\end{remark}
In the sequel, we evaluate the  Riemann-Liouville fractional integral and derivative, and also the Caputo fractional derivative of $\phi(x)$ corresponding to the five kinds of RBFs listed in Table \ref{t1}. 
\subsection{Powers}
For $\phi(x)=x^\beta$  the following results hold.
\begin{theorem} 
For $x>0$ we have
\begin{eqnarray*}
{_0 I_{_x}}^{\!\alpha}x^\beta&=&\frac{\Gamma(\beta+1)}{\Gamma(\alpha+\beta+1)}x^{\alpha+\beta},\quad \beta>-1\\
{_0 D_{_x}}^{\!\alpha}x^\beta&=&\frac{\Gamma(\beta+1)}{\Gamma(\beta-\alpha+1)}x^{\beta-\alpha},\quad \beta>\alpha-1\\
{_{_0} {D^{^{\hspace{-4.5mm}C}}}_{_{_x}}}^{\alpha}x^\beta&=&\frac{\Gamma(\beta+1)}{\Gamma(\beta-\alpha+1)}x^{\beta-\alpha},\quad \beta>\alpha-1.
\end{eqnarray*}
\end{theorem}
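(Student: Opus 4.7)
The plan is to recognize that each of the three displayed identities is an immediate specialization of one of the theorems already proved in Section~2 to the base point $a=0$. Writing $x^\beta = (x-0)^\beta$, the first line is Theorem~\ref{th1} applied with $a=0$; the second line is Theorem~\ref{th2} applied with $a=0$; and the third line is Theorem~\ref{th3} applied with $a=0$. In each case the hypothesis on $\beta$ carries over verbatim from the parent theorem: $\beta>-1$ for the integral and $\beta>\alpha-1$ for either of the fractional derivatives.

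Concretely, I would write one short line per case. For the Riemann-Liouville integral, I quote Theorem~\ref{th1} with $a=0$ to get the first formula. For the Riemann-Liouville derivative I quote Theorem~\ref{th2} with $a=0$. For the Caputo derivative I quote Theorem~\ref{th3} with $a=0$. No new calculation is required, so the proof is essentially a three-line bookkeeping exercise.

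There is no genuine obstacle here, since all of the substantive work has already been done upstream: the substitutions $u=x-\tau$ and $z=u/(x-a)$ that reduce the integral to a Beta function were performed in the proof of Theorem~\ref{th1}, and the identity $D^m x^\lambda=\Gamma(\lambda+1)/\Gamma(\lambda-m+1)\,x^{\lambda-m}$ together with the commutation between $D^m$ and the fractional integral carried the analysis through in the proofs of Theorems~\ref{th2} and~\ref{th3}. The only minor thing worth noting is that the Riemann-Liouville and Caputo formulas produce the same expression here, which is consistent with Theorem~\ref{th-relation} because the boundary derivatives of $x^\beta$ at the origin that appear in the correction sum either vanish or are absorbed into the convention $1/\Gamma(k+1-\alpha)$ under the assumption $\beta>\alpha-1$.
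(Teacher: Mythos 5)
Your proposal is correct and matches the paper's own proof exactly: the paper also disposes of all three identities by citing Theorems \ref{th1}, \ref{th2} and \ref{th3} with $a=0$. The extra observation about consistency with Theorem \ref{th-relation} is a nice sanity check but not needed.
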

\begin{proof}
Theorems \ref{th1}, \ref{th2} and \ref{th3} for $a=0$ give directly the results.
\end{proof}
\begin{theorem} 
For $a\neq 0,$ $n\in\mathbb{N}$ and $x>a$ we have
\begin{eqnarray*}
{_a I_{_x}}^{\!\alpha}x^n&=&n!{(x-a)}^\alpha\sum_{k=0}^n\frac{a^{n-k}{(x-a)}^k}{(n-k)!\Gamma(\alpha+k+1)},\\
{_a D_{_x}}^{\!\alpha}x^n&=&n!{(x-a)}^{-\alpha}\sum_{k=0}^n\frac{a^{n-k}{(x-a)}^k}{(n-k)!\Gamma(k-\alpha+1)},\\
{_{_a} {D^{^{\hspace{-4.5mm}C}}}_{_{_x}}}^{\alpha}x^n&=&n!a^{-m}{(x-a)}^{m-\alpha}\sum_{k=0}^{n-m}\frac{a^{n-k}{(x-a)}^k}{(n-m-k)!\Gamma(m-\alpha+k+1)}.
\end{eqnarray*}
\end{theorem}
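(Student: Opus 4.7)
The plan is to reduce all three identities to Theorems \ref{th1}--\ref{th3} via a single binomial expansion of $x^n$ around $a$:
\begin{equation*}
x^n=\bigl(a+(x-a)\bigr)^n=\sum_{k=0}^n\binom{n}{k}a^{n-k}(x-a)^k.
\end{equation*}
Since the three fractional operators ${_aI_x}^{\!\alpha}$, ${_aD_x}^{\!\alpha}$, ${_{_a}{D^{^{\hspace{-4.5mm}C}}}_{_{_x}}}^{\alpha}$ are all linear, it then suffices to know their action on monomials $(x-a)^k$ shifted to the base point $a$; this is exactly what is supplied by Theorems~\ref{th1}, \ref{th2}, \ref{th3}, provided the parameters fit.

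For the integral identity, I would expand $x^n$ as above, apply ${_aI_x}^{\!\alpha}$ termwise using Theorem~\ref{th1} with $\beta=k$ (which is legitimate since $k\geq 0>-1$), and then collapse the binomial coefficient via $\binom{n}{k}k!=n!/(n-k)!$. Pulling the common factor $(x-a)^\alpha$ out of the sum yields the stated formula. The Riemann--Liouville derivative identity follows by the same scheme, replacing Theorem~\ref{th1} by Theorem~\ref{th2} termwise. Here one should note that although Theorem~\ref{th2} is stated for $\beta>\alpha-1$, the formula $D^m[{_aI_x}^{m-\alpha}(x-a)^k]$ extends to all integers $k\ge 0$ as long as $\alpha\notin\mathbb{N}$, because the pole-free factor $\Gamma(k-\alpha+1)$ is well defined; this justifies summing over the full range $0\le k\le n$.

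For the Caputo identity I would first use the identity ${_{_a}{D^{^{\hspace{-4.5mm}C}}}_{_{_x}}}^{\alpha}f = {_aI_x}^{m-\alpha}[f^{(m)}]$ recalled in the preliminaries, compute the classical $m$-th derivative
\begin{equation*}
(x^n)^{(m)}=\frac{n!}{(n-m)!}\,x^{n-m},\qquad m\le n,
\end{equation*}
then expand $x^{n-m}$ binomially around $a$ and apply ${_aI_x}^{m-\alpha}$ termwise via Theorem~\ref{th1}. After factoring out $(x-a)^{m-\alpha}$, cancelling the common $(n-m)!$ against the factor from $\binom{n-m}{k}$, and rewriting $a^{n-m-k}$ as $a^{-m}\cdot a^{n-k}$ to match the stated form, the claimed identity drops out.

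The argument is essentially mechanical; the only non-routine point is the bookkeeping in the Caputo case, where one must keep track of the shifted summation range $0\le k\le n-m$ produced by the classical differentiation and of the cosmetic factor $a^{-m}$ that makes the Caputo formula visually parallel to the Riemann--Liouville ones. A minor but worth-mentioning caveat is the implicit standing assumption that $\alpha\notin\mathbb{N}$ (so that $m=\lceil\alpha\rceil$ satisfies $m-1<\alpha<m$), together with $m\le n$ for the Caputo identity to be non-trivial.
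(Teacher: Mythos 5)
Your proposal is correct and follows essentially the same route as the paper: binomial (Taylor) expansion of $x^n$ about $a$, linearity, and termwise application of Theorems \ref{th1}--\ref{th3}, with the Caputo case reduced to the integral formula applied to $(x^n)^{(m)}$. Your extra remark on extending Theorem \ref{th2} to all $0\le k\le n$ (and the standing assumption $\alpha\notin\mathbb{N}$, $m\le n$) is a point the paper passes over silently, and is a welcome clarification.
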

\begin{proof}
The Taylor expansion of $x^n$ about the point $x=a$ gives
\begin{eqnarray*}
x^n=\sum_{k=0}^n\frac{n!a^{n-k}}{(n-k)!k!}{(x-a)}^k.
\end{eqnarray*}
Now, according to the linearity of the Riemann-Liouville fractional integral and derivative, we have
\begin{eqnarray*}
{_a I_{_x}}^{\!\alpha}x^n&=&\sum_{k=0}^n\frac{n!a^{n-k}}{(n-k)!k!}~{_a
  I_{_x}}^{\!\alpha}{(x-a)}^k\\
&=&\sum_{k=0}^n\frac{n!a^{n-k}}{(n-k)!k!}\frac{\Gamma(k+1)}{\Gamma(\alpha+k+1)}{(x-a)}^{\alpha+k}\\
&=&n!{(x-a)}^\alpha\sum_{k=0}^n\frac{a^{n-k}{(x-a)}^k}{(n-k)!\Gamma(\alpha+k+1)}.
\end{eqnarray*}
\begin{eqnarray*}
{_a D_{_x}}^{\!\alpha}x^n&=&\sum_{k=0}^n\frac{n!a^{n-k}}{(n-k)!k!}~{_a
  D_{_x}}^{\!\alpha}{(x-a)}^k\\
&=&\sum_{k=0}^n\frac{n!a^{n-k}}{(n-k)!k!}\frac{\Gamma(k+1)}{\Gamma(k-\alpha+1)}{(x-a)}^{k-\alpha}\\
&=&n!{(x-a)}^{-\alpha}\sum_{k=0}^n\frac{a^{n-k}{(x-a)}^k}{(n-k)!\Gamma(k-\alpha+1)}.
\end{eqnarray*}
\begin{eqnarray*}
{_{_a} {D^{^{\hspace{-4.5mm}C}}}_{_{_x}}}^{\alpha}x^n
&=&{_a I_{_x}}^{\!m-\alpha}{\left({x}^n\right)}^{(m)}=\frac{n!}{(n-m)!}{_a
  I_{_x}}^{\!m-\alpha}x^{n-m}\\
&=&n!a^{-m}{(x-a)}^{m-\alpha}\sum_{k=0}^{n-m}\frac{a^{n-k}{(x-a)}^k}{(n-k)!\Gamma(m-\alpha+k+1)}.
\end{eqnarray*}
\end{proof}
\begin{remark}
Similarly, one can show that for $x<b$ and $n\in\mathbb{N}$
\begin{eqnarray*}
{_x I_{_b}}^{\!\alpha}x^n
&=&
n!{(b-x)}^\alpha\sum_{k=0}^n\frac{b^{n-k}{(x-b)}^k}{(n-k)!\Gamma(\alpha+k+1)},\\
{_x D_{_b}}^{\!\alpha}x^n
&=&
n!{(b-x)}^{-\alpha}\sum_{k=0}^n\frac{b^{n-k}{(x-b)}^k}{(n-k)!\Gamma(k-\alpha+1)},\\
{_{_x} {D^{^{\hspace{-4.5mm}C}}}_{_{_b}}}^{\alpha}x^n
&=&
{(-1)}^mn!b^{-m}{(b-x)}^{m-\alpha}\sum_{k=0}^{n-m}\frac{b^{n-k}{(x-b)}^k}{(n-m-k)!\Gamma(m-\alpha+k+1)}.
\end{eqnarray*}
\end{remark}
\subsection{Gaussian}
For $\phi(x)=\exp(-x^2/2)$ the following results hold.
\begin{theorem} 
For $x>0$ we have
\begin{eqnarray*}
{_0 I_{_x}}^{\!\alpha}\mbox{e}^{-\frac{x^2}{2}}
&=&
\frac{x^\alpha}{\Gamma(1+\alpha)}
F_{22}\left(\frac{1}{2},1;\frac{1+\alpha}{2},\frac{2+\alpha}{2};-\frac{x^2}{2}\right),\\
{_0 D_{_x}}^{\!\alpha}\mbox{e}^{-\frac{x^2}{2}}
&=&\frac{x^{-\alpha}}{\Gamma(1-\alpha)}F_{22}
\left(\frac{1}{2},1;\frac{1-\alpha}{2},\frac{2-\alpha}{2};-\frac{x^2}{2}\right),\\
{_{_0}{D^{^{\hspace{-4.5mm}C}}}_{_{_x}}}^{\alpha}\mbox{e}^{-\frac{x^2}{2}}
&=&\frac{x^{-\alpha}}{\Gamma(1-\alpha)}F_{22}
\left(\frac{1}{2},1;\frac{1-\alpha}{2},\frac{2-\alpha}{2};-\frac{x^2}{2}\right).
\end{eqnarray*}
\end{theorem}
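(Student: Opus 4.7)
The approach is to expand the Gaussian in its Maclaurin series, apply Theorems \ref{th1}, \ref{th2}, \ref{th3} to each monomial separately, and then repackage the resulting series as a $F_{22}$ hypergeometric function. I start from
$$\mathrm{e}^{-x^2/2}=\sum_{n=0}^\infty\frac{(-1)^n}{2^n\,n!}\,x^{2n}.$$

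By linearity and Theorem \ref{th1} applied with $a=0$, $\beta=2n$,
$${_0 I_x}^{\!\alpha}\mathrm{e}^{-x^2/2}=\sum_{n=0}^\infty\frac{(-1)^n(2n)!}{2^n\,n!\,\Gamma(\alpha+2n+1)}\,x^{\alpha+2n}.$$
The Riemann--Liouville derivative case is the same calculation with Theorem \ref{th2}, replacing $\Gamma(\alpha+2n+1)$ by $\Gamma(2n-\alpha+1)$ and the prefactor $x^\alpha$ by $x^{-\alpha}$. For the Caputo version, Theorem \ref{th3} produces the same power-law output on each $x^{2n}$, so termwise expansion yields the same series as the Riemann--Liouville derivative.

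The central step is the collapse of the Gamma-quotient coefficients into two Pochhammer symbols matching the denominator of $F_{22}$. Here I invoke the doubling identity $(x)_{2n}=2^{2n}(x/2)_n((x+1)/2)_n$ listed in the preliminaries, first with $x=1$ to obtain $(2n)!=4^n(1/2)_n\,n!$, and then with $x=\alpha+1$ to obtain $\Gamma(\alpha+2n+1)=\Gamma(\alpha+1)\cdot 4^n\,((\alpha+1)/2)_n\,((\alpha+2)/2)_n$. Substituting into the integral series and cancelling the factors of $2^n$, $4^n$, and $n!$ gives
$$\frac{x^\alpha}{\Gamma(1+\alpha)}\sum_{n=0}^\infty\frac{(1/2)_n}{((1+\alpha)/2)_n\,((2+\alpha)/2)_n}\left(-\frac{x^2}{2}\right)^n,$$
which is exactly $\tfrac{x^\alpha}{\Gamma(1+\alpha)}\,F_{22}\bigl(\tfrac12,1;\tfrac{1+\alpha}{2},\tfrac{2+\alpha}{2};-\tfrac{x^2}{2}\bigr)$ because $(1)_n/n!=1$. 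The two derivative formulas drop out by the same bookkeeping with $\alpha$ replaced by $-\alpha$ in the denominator parameters.

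The main obstacle is purely the parameter-matching step above; once the doubling identity is in hand, the rest is routine algebra. Termwise application of the operators is harmless because the Gaussian is entire and its Maclaurin series converges uniformly on compact subsets of $(0,\infty)$, so the fractional operators (which are Abel-type integrals, followed in the derivative cases by a classical differentiation) commute with the summation.
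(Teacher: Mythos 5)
Your proof follows essentially the same route as the paper: termwise application of Theorems \ref{th1}--\ref{th3} to the Maclaurin series of the Gaussian, followed by the Pochhammer doubling identity $(x)_{2n}=2^{2n}(x/2)_n((1+x)/2)_n$ to convert the Gamma-quotient coefficients into the $F_{22}$ parameters. The algebra checks out (the paper writes the intermediate ratio as $(1)_{2n}/(1+\alpha)_{2n}$ before splitting, which is the same computation), and your added remark justifying the interchange of summation with the Abel-type integrals is a small bonus the paper omits.
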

\begin{proof}
The Taylor expansion about the point $x=0$ gives
\begin{eqnarray*}
\mbox{e}^{-\frac{x^2}{2}}=\sum_{n=0}^\infty\frac{{(-1)}^n}{2^nn!}x^{2n}.
\end{eqnarray*}
Therefore 
\begin{eqnarray*}
{_0 I_{_x}}^{\!\alpha}\mbox{e}^{-\frac{x^2}{2}}
&=&\sum_{n=0}^\infty\frac{{(-1)}^n}{2^nn!}{_0 I_{_x}}^{\!\alpha}x^{2n}
=\sum_{n=0}^\infty\frac{{(-1)}^n}{2^nn!}\frac{\Gamma(2n+1)}{\Gamma(2n+\alpha+1)}x^{2n+\alpha}\\
&=&\frac{x^\alpha}{\Gamma(1+\alpha)}\sum_{n=0}^\infty
\frac{{(1)}_{2n}}{{(1+\alpha)}_{2n}n!}{\left(-\frac{x^2}{2}\right)}^n\\
&=&\frac{x^\alpha}{\Gamma(1+\alpha)}\sum_{n=0}^\infty
\frac{{(\frac{1}{2})}_{n}{(1)}_{n}}{{
(\frac{1+\alpha}{2})}_{n}{(\frac{2+\alpha}{2})}_{n}n!}{\left(-\frac{x^2}{2}\right)}^n\\
&=&\frac{x^\alpha}{\Gamma(1+\alpha)}
F_{22}\left(\frac{1}{2},1;\frac{1+\alpha}{2},\frac{2+\alpha}{2};-\frac{x^2}{2}\right).
\end{eqnarray*}
\begin{eqnarray*}
{_0 D_{_x}}^{\!\alpha}\mbox{e}^{-\frac{x^2}{2}}
&=&\sum_{n=0}^\infty\frac{{(-1)}^n}{2^nn!}{_0 D_{_x}}^{\!\alpha}x^{2n}
=\sum_{n=0}^\infty\frac{{(-1)}^n}{2^nn!}\frac{\Gamma(2n+1)}{\Gamma(2n-\alpha+1)}x^{2n-\alpha}\\
&=&\frac{x^{-\alpha}}{\Gamma(1-\alpha)}\sum_{n=0}^\infty\frac{{(1)}_{2n}}{{(1-\alpha)}_{2n}n!}{\left(-\frac{x^2}{2}\right)}^n\\
&=&\frac{x^{-\alpha}}{\Gamma(1-\alpha)}\sum_{n=0}^\infty\frac{{(\frac{1}{2})}_{n}{(1)}_{n}}{{(\frac{1-\alpha}{2})}_{n}{(\frac{2-\alpha}{2})}_{n}n!}{\left(-\frac{x^2}{2}\right)}^n\\
&=&\frac{x^{-\alpha}}{\Gamma(1-\alpha)}
F_{22}\left(\frac{1}{2},1;\frac{1-\alpha}{2},\frac{2-\alpha}{2};-\frac{x^2}{2}\right).
\end{eqnarray*}
\begin{eqnarray*}
{_{_0}
  {D^{^{\hspace{-4.5mm}C}}}_{_{_x}}}^{\alpha}\mbox{e}^{-\frac{x^2}{2}}
&=&\sum_{n=0}^\infty\frac{{(-1)}^n}{2^nn!}{_{_0}
  {D^{^{\hspace{-4.5mm}C}}}_{_{_x}}}^{\alpha}x^{2n}\\
&=&\frac{x^{-\alpha}}{\Gamma(1-\alpha)}F_{22}
\left(\frac{1}{2},1;\frac{1-\alpha}{2},\frac{2-\alpha}{2};-\frac{x^2}{2}\right).
\end{eqnarray*}
\end{proof}
\begin{theorem} 
For $a\neq 0$ and $x>a$ we have 
$$
\begin{array}{rcl}
&&{_a I_{_x}}^{\!\alpha}\mbox{e}^{-\frac{x^2}{2}}\\
&=&
\displaystyle{  {(x-a)}^\alpha\sum_{n=0}^\infty\frac{{(-1)}^n(2n)!}{2^nn!}\left(\sum_{k=0}^{2n}\frac{a^{2n-k}{(x-a)}^k}{(2n-k)!\Gamma(\alpha+k+1)}\right),}\\
&&{_a D_{_x}}^{\!\alpha}\mbox{e}^{-\frac{x^2}{2}}\\
&=&\displaystyle{ {(x-a)}^{-\alpha}\sum_{n=0}^\infty\frac{{(-1)}^n(2n)!}{2^nn!}\left(\sum_{k=0}^{2n}\frac{a^{2n-k}{(x-a)}^k}{(2n-k)!\Gamma(k-\alpha+1)}\right),}\\
&&{a}^{m}{(x-a)}^{\alpha-m}{_{_a}{D^{^{\hspace{-4.5mm}C}}}_{_{_x}}}^{\alpha}\mbox{e}^{-\frac{x^2}{2}}\\
&=&\displaystyle{\sum_{n=0}^\infty\frac{{(-1)}^n(2n)!}{2^nn!}\left(\sum_{k=0}^{2n-m}\frac{a^{2n-k}{(x-a)}^k}{(2n-m-k)!\Gamma(m-\alpha+k+1)}\right).}
\end{array} 
$$
\end{theorem}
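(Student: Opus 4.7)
The plan is to reduce this theorem to the previous one (the $n$-th power case with general base point $a$) by Taylor-expanding the Gaussian about the origin and then invoking linearity of the three fractional operators. This parallels exactly the method just used for $a=0$, but now each monomial $x^{2n}$ is attacked with the $a\ne 0$ formulas rather than with Theorems \ref{th1}--\ref{th3}.

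Concretely, I begin from the well-known expansion
\begin{eqnarray*}
\mbox{e}^{-\frac{x^2}{2}}=\sum_{n=0}^\infty\frac{(-1)^n}{2^n n!}\,x^{2n},
\end{eqnarray*}
which converges uniformly on any compact subinterval of $(a,x)$. This uniform convergence (together with absolute convergence of the termwise transformed series, to be noted at the end) allows the fractional integral ${_a I_{_x}}^{\!\alpha}$ to be passed inside the sum. For the two fractional derivatives, I instead first move ${_a I_{_x}}^{\!m-\alpha}$ inside and then apply $D^m$ (Riemann--Liouville) or apply $D^m$ first and then move ${_a I_{_x}}^{\!m-\alpha}$ inside (Caputo), using the definitions recalled after Definition 7. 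Once inside, I insert the three identities from the preceding theorem, namely
\begin{eqnarray*}
{_a I_{_x}}^{\!\alpha}x^{2n}&=&(2n)!\,{(x-a)}^\alpha\sum_{k=0}^{2n}\frac{a^{2n-k}{(x-a)}^k}{(2n-k)!\,\Gamma(\alpha+k+1)},\\
{_a D_{_x}}^{\!\alpha}x^{2n}&=&(2n)!\,{(x-a)}^{-\alpha}\sum_{k=0}^{2n}\frac{a^{2n-k}{(x-a)}^k}{(2n-k)!\,\Gamma(k-\alpha+1)},\\
{_{_a}{D^{^{\hspace{-4.5mm}C}}}_{_{_x}}}^{\!\!\alpha}x^{2n}&=&(2n)!\,a^{-m}{(x-a)}^{m-\alpha}\sum_{k=0}^{2n-m}\frac{a^{2n-k}{(x-a)}^k}{(2n-m-k)!\,\Gamma(m-\alpha+k+1)},
\end{eqnarray*}
and then pull the common factor $(x-a)^{\pm\alpha}$ (resp. $a^{-m}(x-a)^{m-\alpha}$) out of the $n$-sum. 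In the Caputo case, multiplying through by $a^m(x-a)^{\alpha-m}$ clears the outer factor and yields precisely the claimed identity, with the understanding that the inner sums for which $2n<m$ are empty (these are exactly the terms annihilated by $D^m$ before the Riemann--Liouville integral is applied).

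The one genuine step of care is the interchange of the infinite sum with the fractional operators. For ${_a I_{_x}}^{\!\alpha}$ this is routine: the kernel $(x-\tau)^{\alpha-1}$ is locally integrable and the partial sums converge uniformly on $[a,x]$, so dominated convergence applies. For ${_a D_{_x}}^{\!\alpha}=D^m\circ{_a I_{_x}}^{\!m-\alpha}$ I would justify passing $D^m$ inside by observing that the series obtained after termwise integration is a power series in $(x-a)$ whose radius of convergence is infinite (the coefficients decay like $1/\Gamma$ of a linear function of the index), hence termwise differentiation is legal; the Caputo case is analogous. This analytic justification is the only non-mechanical ingredient, and it is exactly where I expect the main (albeit mild) obstacle to lie — the rest is bookkeeping of Pochhammer-type factors.
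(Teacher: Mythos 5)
Your proposal is correct and follows exactly the paper's proof: Taylor-expand the Gaussian about the origin, apply the three fractional operators termwise by linearity, and insert the formulas of the preceding theorem for $x^{2n}$ with base point $a\neq 0$, then pull out the common power of $(x-a)$. The only difference is that you explicitly justify the interchange of the infinite sum with the fractional operators, a point the paper passes over in silence.
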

\begin{proof}
\begin{eqnarray*}
{_a I_{_x}}^{\!\alpha}\mbox{e}^{-\frac{x^2}{2}}
&=&\sum_{n=0}^\infty\frac{{(-1)}^n}{2^nn!}{_a I_{_x}}^{\!\alpha}x^{2n}\\
&=&\sum_{n=0}^\infty\frac{{(-1)}^n}{2^nn!}(2n)!{(x-a)}^\alpha
\sum_{k=0}^{2n}\frac{a^{2n-k}{(x-a)}^k}{(2n-k)!\Gamma(\alpha+k+1)}\\
&=&{(x-a)}^\alpha\sum_{n=0}^\infty\frac{{(-1)}^n(2n)!}{2^nn!}
\left(\sum_{k=0}^{2n}\frac{a^{2n-k}{(x-a)}^k}{(2n-k)!\Gamma(\alpha+k+1)}\right).
\end{eqnarray*}
\begin{eqnarray*}
{_a D_{_x}}^{\!\alpha}\mbox{e}^{-\frac{x^2}{2}}
&=&\sum_{n=0}^\infty\frac{{(-1)}^n}{2^nn!}{_a D_{_x}}^{\!\alpha}x^{2n}\\
&=&\sum_{n=0}^\infty\frac{{(-1)}^n}{2^nn!}(2n)!{(x-a)}^{-\alpha}\sum_{k=0}^{2n}
\frac{a^{2n-k}{(x-a)}^k}{(2n-k)!\Gamma(k-\alpha+1)}\\
&=&{(x-a)}^{-\alpha}\sum_{n=0}^\infty\frac{{(-1)}^n(2n)!}{2^nn!}
\left(\sum_{k=0}^{2n}\frac{a^{2n-k}{(x-a)}^k}{(2n-k)!\Gamma(k-\alpha+1)}\right).
\end{eqnarray*}
\begin{eqnarray*}
{_{_a}{D^{^{\hspace{-4.5mm}C}}}_{_{_x}}}^{\alpha}\mbox{e}^{-\frac{x^2}{2}}
&=&\sum_{n=0}^\infty\frac{{(-1)}^n}{2^nn!}{_{_a}{D^{^{\hspace{-4.5mm}C}}}_{_{_x}}}^{\alpha}x^{2n}\\
&=&\sum_{n=0}^\infty\frac{{(-1)}^n}{2^nn!}(2n)!a^{-m}{(x-a)}^{m-\alpha}\\
&&\left(\sum_{k=0}^{2n-m}\frac{a^{2n-k}{(x-a)}^k}{(2n-m-k)!\Gamma(m-\alpha+k+1)}\right)\\
&=&{a}^{-m}{(x-a)}^{m-\alpha}\sum_{n=0}^\infty
\frac{{(-1)}^n(2n)!}{2^nn!}\\
&&\left(\sum_{k=0}^{2n-m}
\frac{a^{2n-k}{(x-a)}^k}{(2n-m-k)!\Gamma(m-\alpha+k+1)}\right).
\end{eqnarray*}
\end{proof}
\begin{remark}
Similarly, one can show that for $x<b$
\begin{eqnarray*}
{_x I_{_b}}^{\!\alpha}\mbox{e}^{-\frac{x^2}{2}}
&=&{(b-x)}^\alpha\sum_{n=0}^\infty\frac{{(-1)}^n(2n)!}{2^nn!}
\left(\sum_{k=0}^{2n}\frac{b^{2n-k}{(x-b)}^k}{(2n-k)!\Gamma(\alpha+k+1)}\right),\\
{_x D_{_b}}^{\!\alpha}\mbox{e}^{-\frac{x^2}{2}}
&=&{(b-x)}^{-\alpha}\sum_{n=0}^\infty\frac{{(-1)}^n(2n)!}{2^nn!}
\left(\sum_{k=0}^{2n}\frac{b^{2n-k}{(x-b)}^k}{(2n-k)!\Gamma(k-\alpha+1)}\right),\\
{_{_x}{D^{^{\hspace{-4.5mm}C}}}_{_{_b}}}^{\alpha}\mbox{e}^{-\frac{x^2}{2}}
&=&{(-1)}^m{b}^{-m}{(b-x)}^{m-\alpha}\sum_{n=0}^\infty\frac{{(-1)}^n(2n)!}{2^nn!}\\
&& \left(\sum_{k=0}^{2n-m}\frac{b^{2n-k}{(x-b)}^k}{(2n-m-k)!\Gamma(m-\alpha+k+1)}\right).
\end{eqnarray*}
\end{remark}
\subsection{Multiquadric}
For $\phi(x)={(1+x^2/2)}^{\beta/2},$ $\beta\in\mathbb{R}$ the following results hold.
\begin{theorem} 
For $x>0$ we have 
\begin{eqnarray*}
{_0 I_{_x}}^{\!\alpha}\left(1+\frac{x^2}{2}\right)^\frac{\beta}{2}
&=&\frac{x^\alpha}{\Gamma(1+\alpha)}
F_{32}\left(\frac{1}{2},1,-\frac{\beta}{2};\frac{1+\alpha}{2},\frac{2+\alpha}{2};-\frac{x^2}{2}\right),\\
{_0 D_{_x}}^{\!\alpha}\left(1+\frac{x^2}{2}\right)^\frac{\beta}{2}
&=&\frac{x^{-\alpha}}{\Gamma(1-\alpha)}
F_{22}\left(\frac{1}{2},1,-\frac{\beta}{2};\frac{1-\alpha}{2},\frac{2-\alpha}{2};-\frac{x^2}{2}\right),\\
{_{_0}{D^{^{\hspace{-4.5mm}C}}}_{_{_x}}}^{\alpha}\left(1+\frac{x^2}{2}\right)^\frac{\beta}{2}
&=&\frac{x^{-\alpha}}{\Gamma(1-\alpha)}
F_{22}\left(\frac{1}{2},1,-\frac{\beta}{2};\frac{1-\alpha}{2},\frac{2-\alpha}{2};-\frac{x^2}{2}\right).
\end{eqnarray*}
\end{theorem}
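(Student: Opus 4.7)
The plan is to mirror the proof structure used in the Gaussian case: expand $(1+x^2/2)^{\beta/2}$ as a power series about $x=0$, apply the fractional operators term-by-term using the power-function results already established at $a=0$, and then repackage the resulting series via Pochhammer-symbol identities so that it matches the hypergeometric template.

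First, I would invoke the generalized binomial theorem to write
$$\left(1+\frac{x^2}{2}\right)^{\beta/2} = \sum_{n=0}^\infty \binom{\beta/2}{n}\left(\frac{x^2}{2}\right)^n = \sum_{n=0}^\infty \frac{(-1)^n\,(-\beta/2)_n}{2^n\,n!}\,x^{2n},$$
using $\binom{s}{n} = (-1)^n(-s)_n/n!$. This converges for $|x|<\sqrt{2}$, which is enough to justify term-by-term action of the Riemann-Liouville integral on any compact subinterval of $(0,\sqrt{2})$, and analytic continuation will extend the identity where both sides are defined.

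Next, applying ${_0 I_x}^{\!\alpha}$ termwise with ${_0 I_x}^{\!\alpha}x^{2n} = \Gamma(2n+1)x^{2n+\alpha}/\Gamma(2n+\alpha+1)$, and then factoring out $x^\alpha/\Gamma(1+\alpha)$, gives
$${_0 I_x}^{\!\alpha}\!\left(1+\tfrac{x^2}{2}\right)^{\beta/2} = \frac{x^\alpha}{\Gamma(1+\alpha)}\sum_{n=0}^\infty \frac{(-\beta/2)_n\,(1)_{2n}}{(1+\alpha)_{2n}\,n!}\left(-\frac{x^2}{2}\right)^n.$$
At this point I would invoke the duplication identity $(y)_{2n}=2^{2n}(y/2)_n((y+1)/2)_n$ (listed in the preliminaries) on both $(1)_{2n}$ and $(1+\alpha)_{2n}$; the powers of $2$ cancel and one obtains
$$\frac{(1)_{2n}}{(1+\alpha)_{2n}} = \frac{(1/2)_n\,(1)_n}{((1+\alpha)/2)_n\,((2+\alpha)/2)_n},$$
which is exactly the coefficient structure of $F_{32}\!\left(\tfrac12,1,-\tfrac{\beta}{2};\tfrac{1+\alpha}{2},\tfrac{2+\alpha}{2};-\tfrac{x^2}{2}\right)$, establishing the integral formula.

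For the two derivative formulas, I would run the same argument, replacing the termwise identity by ${_0 D_x}^{\!\alpha}x^{2n} = \Gamma(2n+1)x^{2n-\alpha}/\Gamma(2n-\alpha+1)$ and the corresponding Caputo identity from the previous theorem (both giving the same symbolic expression at $a=0$); the only changes are $\alpha\mapsto -\alpha$ in the prefactor and denominator Pochhammers, producing the stated $F_{32}$ series with $(1\pm\alpha)/2$ and $(2\pm\alpha)/2$. The main obstacle is purely the Pochhammer bookkeeping to confirm the duplication cancellation produces the three-upper / two-lower pattern; a secondary point, but a routine one, is justifying the interchange of summation and the fractional operators, which follows from uniform convergence of the binomial series on compact subsets of its disk of convergence together with boundedness of the integration kernel on that subset.
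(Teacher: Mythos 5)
Your proposal matches the paper's own proof essentially step for step: binomial/Taylor expansion about $x=0$, termwise application of the $a=0$ power formulas, and the Pochhammer duplication identity $(y)_{2n}=2^{2n}(y/2)_n((1+y)/2)_n$ to recast the series as a generalized hypergeometric function; your added remarks on convergence and analytic continuation are a small bonus the paper omits. Note that your $F_{32}$ for the two derivative formulas is in fact the correct symbol (three upper, two lower parameters), consistent with the paper's own proof, while the theorem statement's $F_{22}$ is a typo.
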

\begin{proof}
The Taylor expansion about the point $x=0$ gives
\begin{eqnarray*}
\left(1+\frac{x^2}{2}\right)^\frac{\beta}{2}=\sum_{n=0}^\infty\frac{{(\frac{\beta}{2})}^{(n)}}{2^nn!}x^{2n}.
\end{eqnarray*}
Therefore 
\begin{eqnarray*}
{_0 I_{_x}}^{\!\alpha}\left(1+\frac{x^2}{2}\right)^\frac{\beta}{2}
&=&\sum_{n=0}^\infty\frac{{(\frac{\beta}{2})}^{(n)}}{2^nn!}{_0 I_{_x}}^{\!\alpha}x^{2n}\\
&=&\sum_{n=0}^\infty\frac{{(-1)}^n{\left(-\frac{\beta}{2}\right)}_{n}}{2^nn!}
\frac{\Gamma(2n+1)}{\Gamma(2n+\alpha+1)}x^{2n+\alpha}\\
&=&\frac{x^\alpha}{\Gamma(1+\alpha)}\sum_{n=0}^\infty
\frac{{(1)}_{2n}{\left(-\frac{\beta}{2}\right)}_{n}}{{(1+\alpha)}_{2n}n!}{\left(-\frac{x^2}{2}\right)}^n\\
&=&\frac{x^\alpha}{\Gamma(1+\alpha)}\sum_{n=0}^\infty
\frac{{(\frac{1}{2})}_{n}{(1)}_{n}{\left(-\frac{\beta}{2}\right)}_{n}}{{(
\frac{1+\alpha}{2})}_{n}{(\frac{2+\alpha}{2})}_{n}n!}{\left(-\frac{x^2}{2}\right)}^n\\
&=&\frac{x^\alpha}{\Gamma(1+\alpha)}
F_{32}\left(\frac{1}{2},1,-\frac{\beta}{2};\frac{1+\alpha}{2},\frac{2+\alpha}{2};-\frac{x^2}{2}\right).
\end{eqnarray*}
\begin{eqnarray*}
{_0 D_{_x}}^{\!\alpha}\left(1+\frac{x^2}{2}\right)^\frac{\beta}{2}
&=&\sum_{n=0}^\infty\frac{{(-1)}^n{\left(-\frac{\beta}{2}\right)}_{n}}{2^nn!}{_0 D_{_x}}^{\!\alpha}x^{2n}\\
&=&\sum_{n=0}^\infty\frac{{(-1)}^n{\left(-\frac{\beta}{2}\right)}_{n}}{2^nn!}
\frac{\Gamma(2n+1)}{\Gamma(2n-\alpha+1)}x^{2n-\alpha}\\
&=&\frac{x^{-\alpha}}{\Gamma(1-\alpha)}\sum_{n=0}^\infty
\frac{{(1)}_{2n}{\left(-\frac{\beta}{2}\right)}_{n}}{{(1-\alpha)}_{2n}n!}{\left(-\frac{x^2}{2}\right)}^n\\
&=&\frac{x^{-\alpha}}{\Gamma(1-\alpha)}\sum_{n=0}^\infty
\frac{{(\frac{1}{2})}_{n}{(1)}_{n}{\left(-\frac{\beta}{2}\right)}_{n}}{{(\frac{1-\alpha}{2})}_{n}{(\frac{2-\alpha}{2})}_{n}n!}{\left(-\frac{x^2}{2}\right)}^n\\
&=&\frac{x^{-\alpha}}{\Gamma(1-\alpha)}
F_{32}\left(\frac{1}{2},1,{-\frac{\beta}{2}};\frac{1-\alpha}{2},\frac{2-\alpha}{2};-\frac{x^2}{2}\right).
\end{eqnarray*}
\begin{eqnarray*}
{_{_0}
  {D^{^{\hspace{-4.5mm}C}}}_{_{_x}}}^{\alpha}\left(1+\frac{x^2}{2}\right)^\frac{\beta}{2}
&=&\sum_{n=0}^\infty\frac{{(-1)}^n{\left(-\frac{\beta}{2}\right)}_{n}}{2^nn!}{_{_0}
  {D^{^{\hspace{-4.5mm}C}}}_{_{_x}}}^{\alpha}x^{2n}\\
&=&\frac{x^{-\alpha}}{\Gamma(1-\alpha)}
F_{32}\left(\frac{1}{2},1,{-\frac{\beta}{2}};\frac{1-\alpha}{2},\frac{2-\alpha}{2};-\frac{x^2}{2}\right).
\end{eqnarray*}
\end{proof}
\begin{theorem} 
For $a\neq 0$ and $x>a$ we have 
\begin{eqnarray*}
{_a I_{_x}}^{\!\alpha}\left(1+\frac{x^2}{2}\right)^\frac{\beta}{2}
&=&\Gamma(1+\beta/2){(x-a)}^\alpha
\sum_{n=0}^\infty\frac{(2n)!}{2^nn!\Gamma(\beta/2-n+1)}\\
&&
\left(\sum_{k=0}^{2n}\frac{a^{2n-k}{(x-a)}^k}{(2n-k)!\Gamma(\alpha+k+1)}\right),\\
{_a D_{_x}}^{\!\alpha}\left(1+\frac{x^2}{2}\right)^\frac{\beta}{2}
&=&\Gamma(1+\beta/2){(x-a)}^{-\alpha}
\sum_{n=0}^\infty\frac{(2n)!}{2^nn!\Gamma(\beta/2-n+1)}\\
&&\left(\sum_{k=0}^{2n}\frac{a^{2n-k}{(x-a)}^k}{(2n-k)!\Gamma(k-\alpha+1)}\right),\\
{_{_a}{D^{^{\hspace{-4.5mm}C}}}_{_{_x}}}^{\alpha}\left(1+\frac{x^2}{2}\right)^\frac{\beta}{2}
&=&\Gamma(1+\beta/2)a^{-m}{(x-a)}^{m-\alpha}
\sum_{n=0}^\infty\frac{(2n)!}{2^nn!\Gamma(\beta/2-n+1)}\\
&&\left(\sum_{k=0}^{2n-m}\frac{a^{2n-k}{(x-a)}^k}{(2n-m-k)!\Gamma(m-\alpha+k+1)}\right).
\end{eqnarray*}
\end{theorem}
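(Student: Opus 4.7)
The plan is to reuse the blueprint of the preceding Gaussian theorem, replacing the Taylor expansion of $\mathrm{e}^{-x^2/2}$ by the binomial Taylor expansion of $(1+x^2/2)^{\beta/2}$ about $x=0$. Writing the falling factorial coefficient as a ratio of Gamma functions gives
\[
\left(1+\frac{x^2}{2}\right)^{\beta/2} = \Gamma\!\left(1+\frac{\beta}{2}\right)\sum_{n=0}^\infty \frac{x^{2n}}{2^n\,n!\,\Gamma\!\left(\frac{\beta}{2}-n+1\right)},
\]
which immediately accounts for the prefactor $\Gamma(1+\beta/2)$ present in all three of the claimed identities.

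The next step is to apply each of the operators ${_a I_{_x}}^{\!\alpha}$, ${_a D_{_x}}^{\!\alpha}$ and ${_{_a}{D^{^{\hspace{-4.5mm}C}}}_{_{_x}}}^{\alpha}$ termwise, exploiting linearity together with the Power theorem from the preceding subsection applied with $n$ replaced by the even integer $2n$. This reduces the whole computation to substituting the already established closed forms of
\[
{_a I_{_x}}^{\!\alpha}x^{2n},\qquad {_a D_{_x}}^{\!\alpha}x^{2n},\qquad {_{_a}{D^{^{\hspace{-4.5mm}C}}}_{_{_x}}}^{\alpha}x^{2n}
\]
into the series. Factoring the $n$- and $k$-independent quantities $(x-a)^\alpha$, $(x-a)^{-\alpha}$ and $a^{-m}(x-a)^{m-\alpha}$ outside the resulting double sum produces precisely the three stated identities. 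In the Caputo case the inner sum upper bound $2n-m$ is to be interpreted as giving the empty sum (and hence zero) for $2n<m$, consistent with $(x^{2n})^{(m)}=0$ in that range.

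The main obstacle is justifying the termwise interchange of the fractional operator with the infinite Taylor series. The binomial series for $\phi$ converges absolutely on $|x|<\sqrt{2}$; on any compact subinterval $[a,R]$ lying strictly inside this disk of convergence, the Abel-type kernel $(x-\tau)^{m-\alpha-1}$ is absolutely integrable, and the partial sums of the series are uniformly bounded by a convergent majorant. Dominated convergence therefore legitimises the interchange of summation and the integral defining ${_a I_{_x}}^{\!m-\alpha}$. The subsequent classical $m$-fold differentiation (for the Riemann--Liouville operator) and the pre-differentiation (for the Caputo operator) then commute with the sum because each term has become a fixed power of $(x-a)$ times a polynomial in $(x-a)$. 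Beyond the radius of convergence, the identity extends by analytic continuation of both sides in $x$, which is justified since both the RBF and the hypergeometric-type sums depend holomorphically on $x$ in their natural domains.
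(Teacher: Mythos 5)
Your proposal is correct and follows essentially the same route as the paper: expand $(1+x^2/2)^{\beta/2}$ in its binomial Taylor series about $x=0$ with coefficients written as $\Gamma(1+\beta/2)/\Gamma(\beta/2-n+1)$, apply the $a\neq 0$ powers theorem termwise to $x^{2n}$, and factor out the common powers of $(x-a)$. Your added discussion of when the termwise interchange is legitimate (and the restriction $|x|<\sqrt{2}$ with analytic continuation beyond it) is a justification the paper omits entirely, but it does not change the argument's structure.
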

\begin{proof}
\begin{eqnarray*}
{_a I_{_x}}^{\!\alpha}\left(1+\frac{x^2}{2}\right)^\frac{\beta}{2}
&=&\sum_{n=0}^\infty\frac{{(\frac{\beta}{2})}^{(n)}}{2^nn!}{_a
  I_{_x}}^{\!\alpha}x^{2n}\\
&=&\sum_{n=0}^\infty\frac{\Gamma(1+\beta/2)}{
\Gamma(\beta/2-n+1)2^nn!}(2n)!{(x-a)}^\alpha\\
&&\left(\sum_{k=0}^{2n}\frac{a^{2n-k}{(x-a)}^k}{(2n-k)!\Gamma(\alpha+k+1)}\right)\\
&=&\Gamma(1+\beta/2){(x-a)}^\alpha
\sum_{n=0}^\infty\frac{(2n)!}{2^nn!\Gamma(\beta/2-n+1)}\\
&&\left(\sum_{k=0}^{2n}\frac{a^{2n-k}{(x-a)}^k}{(2n-k)!\Gamma(\alpha+k+1)}\right).
\end{eqnarray*}
\begin{eqnarray*}
&&{_a D_{_x}}^{\!\alpha}\left(1+\frac{x^2}{2}\right)^\frac{\beta}{2}\\
&=&\sum_{n=0}^\infty\frac{{(\frac{\beta}{2})}^{(n)}}{2^nn!}{_a
  D_{_x}}^{\!\alpha}x^{2n}\\
&=&\sum_{n=0}^\infty\frac{\Gamma(1+\beta/2)}{\Gamma(\beta/2-n+1)2^nn!}(2n)!{(x-a)}^{-\alpha}
\sum_{k=0}^{2n}\frac{a^{2n-k}{(x-a)}^k}{(2n-k)!\Gamma(k-\alpha+1)}\\
&=&\Gamma(1+\beta/2){(x-a)}^{-\alpha}
\sum_{n=0}^\infty\frac{(2n)!}{2^nn!\Gamma(\beta/2-n+1)}\\
&&\left(\sum_{k=0}^{2n}\frac{a^{2n-k}{(x-a)}^k}{(2n-k)!\Gamma(k-\alpha+1)}\right).
\end{eqnarray*}
\begin{eqnarray*}
&&{_{_a}{D^{^{\hspace{-4.5mm}C}}}_{_{_x}}}^{\alpha}\left(1+\frac{x^2}{2}\right)^\frac{\beta}{2}\\
&=&\sum_{n=0}^\infty\frac{{(\frac{\beta}{2})}^{(n)}}{2^nn!}{_{_a}{D^{^{\hspace{-4.5mm}C}}}_{_{_x}}}^{\alpha}x^{2n}\\
&=&\sum_{n=0}^\infty\frac{\Gamma(1+\beta/2)}{\Gamma(\beta/2-n+1)2^nn!}(2n)!a^{-m}{(x-a)}^{m-\alpha}\\
&&\left(\sum_{k=0}^{2n-m}\frac{a^{2n-k}{(x-a)}^k}{(2n-m+k)!\Gamma(m-\alpha+k+1)}\right)\\
&=&\Gamma(1+\beta/2)a^{-m}{(x-a)}^{m-\alpha}
\sum_{n=0}^\infty\frac{(2n)!}{2^nn!\Gamma(\beta/2-n+1)}\\
&&\left(\sum_{k=0}^{2n-m}\frac{a^{2n-k}{(x-a)}^k}{(2n-m-k)!\Gamma(m-\alpha+k+1)}\right).
\end{eqnarray*}
\end{proof}
\begin{remark}
Similarly, one can show that for $x<b$ 
\begin{eqnarray*}
{_x I_{_b}}^{\!\alpha}\left(1+\frac{x^2}{2}\right)^\frac{\beta}{2}
&=&\Gamma(1+\beta/2){(b-x)}^\alpha\sum_{n=0}^\infty\frac{(2n)!}{2^nn!\Gamma(\beta/2-n+1)}\\
&&\left(\sum_{k=0}^{2n}\frac{b^{2n-k}{(x-b)}^k}{(2n-k)!\Gamma(\alpha+k+1)}\right),\\
{_x D_{_b}}^{\!\alpha}\left(1+\frac{x^2}{2}\right)^\frac{\beta}{2}
&=&\Gamma(1+\beta/2){(b-x)}^{-\alpha}\sum_{n=0}^\infty\frac{(2n)!}{2^nn!\Gamma(\beta/2-n+1)}\\
&&\left(\sum_{k=0}^{2n}\frac{b^{2n-k}{(x-b)}^k}{(2n-k)!\Gamma(k-\alpha+1)}\right),
\end{eqnarray*}
\begin{eqnarray*}
&&{_{_x}{D^{^{\hspace{-4.5mm}C}}}_{_{_b}}}^{\alpha}\left(1+\frac{x^2}{2}\right)^\frac{\beta}{2}\\
&=&{(-1)}^m\Gamma(1+\beta/2)b^{-m}{(b-x)}^{m-\alpha}
\sum_{n=0}^\infty\frac{(2n)!}{2^nn!\Gamma(\beta/2-n+1)}\\
&&\left(\sum_{k=0}^{2n-m}\frac{b^{2n-k}{(x-b)}^k}{(2n-m-k)!\Gamma(m-\alpha+k+1)}\right).
\end{eqnarray*}
\end{remark}
\subsection{Thin-plate splines}
For $\phi(x)=x^{2n}\ln(x),$ $n\in\mathbb{N}$ the following results hold.
\begin{theorem}
For $x>0$ we have
\begin{eqnarray*}
&&{_0 I_{_x}}^{\!\alpha}x^{2n}\ln(x)\\
&=&\frac{\Gamma(2n+1)}{\Gamma(2n+1+\alpha)}x^{\alpha+2n}
\left(\ln(x)+\Psi(2n+1)-\Psi(2n+1+\alpha)\right),\\
&&{_{_0}{D^{^{\hspace{-4.5mm}C}}}_{_{_x}}}^{\alpha}x^{2n}\ln(x)\\
&=&\frac{\Gamma(2n+1)}{\Gamma(2n+1-\alpha)}x^{2n-\alpha}
\left(\vphantom{\sum_{r=1}^m}\ln(x)+\Psi(2n-m+1)-\Psi(2n+1-\alpha)
\right.\\
&&\left.+m!\Gamma(2n-m+1)
\sum_{r=1}^m\frac{{(-1)}^{r-1}}{r(m-r)!\Gamma(2n-m+r+1)}\right),\\
&&{_0 D_{_x}}^{\!\alpha}x^{2n}\ln(x)\\
&=&\frac{\Gamma(2n+1)}{\Gamma(2n+1-\alpha)}x^{2n-\alpha}
\left(\vphantom{\sum_{r=1}^m}\ln(x)+\Psi(2n-m+1)-\Psi(2n+1-\alpha)\right.\\
&&\left.+m!\Gamma(2n-m+1)
\sum_{r=1}^m\frac{{(-1)}^{r-1}}{r(m-r)!\Gamma(2n-m+r+1)}\right),
\end{eqnarray*}
where $\Psi(x)$ is the logarithmic derivative of the Gamma function.
\end{theorem}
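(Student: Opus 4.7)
The plan is to view the logarithmic factor as a parameter derivative of a power,
\[
x^{2n}\ln(x) \;=\; \frac{\partial}{\partial\beta}\, x^\beta \Big|_{\beta=2n},
\]
and push $\partial_\beta$ through the fractional operators. This exchange is legitimate on $\{x>0\}$: the integrands $(x-\tau)^{\alpha-1}\tau^\beta$ and their $\beta$-derivatives $(x-\tau)^{\alpha-1}\tau^\beta\ln\tau$ depend analytically on $\beta$ in a neighborhood of $2n$ and are dominated by integrable functions of $\tau$ uniformly in $\beta$ near $2n$, so dominated convergence applies. All remaining work is bookkeeping with $\Gamma$-ratios and digamma identities.

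\emph{Integral.} Differentiating Theorem \ref{th1} in $\beta$ and using $\partial_\beta\Gamma(\beta+1)=\Gamma(\beta+1)\Psi(\beta+1)$ together with $\partial_\beta x^{\alpha+\beta}=x^{\alpha+\beta}\ln x$, then setting $\beta=2n$, yields
\[
{_0I_x}^{\!\alpha}(x^{2n}\ln x)=\frac{\Gamma(2n+1)}{\Gamma(2n+\alpha+1)}x^{2n+\alpha}\bigl[\ln x+\Psi(2n+1)-\Psi(2n+\alpha+1)\bigr],
\]
which is the first stated identity.

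\emph{Caputo derivative.} I would apply ${_{_0}D^{C}_{_x}}^{\alpha} f = {_0I_x}^{m-\alpha}[f^{(m)}]$ with $f=x^{2n}\ln x$ and evaluate $f^{(m)}$ via the Leibniz rule:
\[
(x^{2n}\ln x)^{(m)}=\frac{(2n)!}{(2n-m)!}\,x^{2n-m}\ln x \;+\; m!\,(2n)!\,x^{2n-m}\sum_{r=1}^{m}\frac{(-1)^{r-1}}{r\,(m-r)!\,(2n-m+r)!},
\]
where the sum arises from the indices $k=0,\dots,m-1$ after substituting $(\ln x)^{(m-k)}=(-1)^{m-k-1}(m-k-1)!x^{k-m}$ and reindexing $r=m-k$. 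Applying ${_0I_x}^{m-\alpha}$ then splits into two pieces: on the log summand I reuse the integral formula above with $2n$ replaced by $2n-m$ (this is exactly what produces the arguments $\Psi(2n-m+1)$ and $\Psi(2n-\alpha+1)$), while on the power summand Theorem \ref{th1} suffices. Factoring $\frac{\Gamma(2n+1)}{\Gamma(2n-\alpha+1)}x^{2n-\alpha}$ assembles the Caputo formula exactly as stated.

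\emph{Riemann--Liouville derivative.} Every term in the Leibniz expansion of $(x^{2n}\ln x)^{(k)}$ is a multiple of $x^{2n-k}$ or $x^{2n-k}\ln x$, both of which tend to $0$ as $x\to 0^+$ whenever $k<2n$. In the relevant range $m=\lceil\alpha\rceil\le 2n$ the boundary corrections in Theorem \ref{th-relation} therefore vanish, so ${_0D_x}^\alpha(x^{2n}\ln x)$ coincides with the Caputo derivative computed above. The main obstacle is the combinatorial reindexing in the Caputo step, including the $1/r$ weights that emerge from the $(\ln x)^{(m-k)}$ factors; an alternative route through ${_0D_x}^\alpha f=D^m[{_0I_x}^{m-\alpha} f]$ gives the formally different but equivalent form with $\Psi(2n+1)-\Psi(2n+m-\alpha+1)$, so a digamma identity such as $\Psi(2n+1)-\Psi(2n-m+1)=\sum_{r=1}^{m}(2n-m+r)^{-1}$ may be needed to reconcile the two presentations.
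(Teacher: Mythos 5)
Your proposal is correct and follows essentially the same route as the paper: differentiating the power formula ${_0I_x}^{\!\alpha}x^\beta=\frac{\Gamma(\beta+1)}{\Gamma(\alpha+\beta+1)}x^{\alpha+\beta}$ with respect to the exponent (the paper writes $\frac12\frac{d}{dn}$ of ${_0I_x}^{\!\alpha}x^{2n}$, a trivial reparametrization of your $\partial_\beta$ at $\beta=2n$), then a Leibniz expansion of $(x^{2n}\ln x)^{(m)}$ followed by ${_0I_x}^{\!m-\alpha}$ for the Caputo case, and finally the vanishing of the boundary terms in Theorem \ref{th-relation} to identify the Riemann--Liouville with the Caputo derivative. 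Your explicit dominated-convergence justification of the interchange and the condition $m=\lceil\alpha\rceil\le 2n$ for the vanishing of $f^{(k)}(0)$ are slightly more careful than the paper's wording, but the argument is the same.
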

\begin{proof}
\begin{eqnarray*}
{_a I_{_x}}^{\!\alpha}x^{2n}\ln(x)
&=&\frac{1}{\Gamma(\alpha)}\int_a^x (x-\tau)^{\alpha-1}\tau^{2n}\ln(\tau) d\tau\\
&=&\frac{1}{2}\frac{d}{dn}
\left(\frac{1}{\Gamma(\alpha)}\int_a^x
(x-\tau)^{\alpha-1}\tau^{2n}d\tau\right)\\
&=&\frac{1}{2}\frac{d}{dn}
\left({_a I_{_x}}^{\!\alpha}x^{2n}\right).
\end{eqnarray*}
Then it suffices to find the derivative of the 
Riemann-Liouville fractional integral of the powers RBF $x^{2n}$ with respect to $n$.
Therefore 
\begin{eqnarray*}
{_0 I_{_x}}^{\!\alpha}x^{2n}\ln(x)=\frac{1}{2}\frac{d}{dn}
\left(\frac{\Gamma(2n+1)}{\Gamma(\alpha+2n+1)}x^{\alpha+2n}\right),
\end{eqnarray*}
which in turn gives
\begin{eqnarray*}
&&{_0 I_{_x}}^{\!\alpha}x^{2n}\ln(x)\\
&=&\frac{\left(\Gamma'(2n+1)x^{\alpha+2n}+\Gamma(2n+1)x^{\alpha+2n}\ln(x)\right)%
\Gamma(2n+1+\alpha)}%
{{\left(\Gamma(2n+1+\alpha)\right)}^2}\\
&&
-\frac{\Gamma'(2n+1+\alpha)\Gamma(2n+1)x^{\alpha+2n}}%
{{\left(\Gamma(2n+1+\alpha)\right)}^2}.
\end{eqnarray*}
Now by substituting 
\begin{eqnarray*}
&&\Gamma'(2n+1)=\Psi(2n+1)\Gamma(2n+1),\\
&&\Gamma'(2n+1+\alpha)=\Psi(2n+1+\alpha)\Gamma(2n+1+\alpha),
\end{eqnarray*}
we have
\begin{equation}\label{i0t}
\begin{array}{rcl}
&&{_0 I_{_x}}^{\!\alpha}x^{2n}\ln(x)\\
&=&\displaystyle{ \frac{\Gamma(2n+1)}{\Gamma(2n+1+\alpha)}x^{\alpha+2n}
\left(\ln(x)+\Psi(2n+1)-\Psi(2n+1+\alpha)\right)}.
\end{array} 
\end{equation}
Since it is well known that for the thin-plate 
splines and their derivatives at $x=0$ the 
limiting value $0$ is considered avoiding the singularity,
according to theorem \ref{th-relation} we have
$${_0 D_{_x}}^{\!\alpha}x^{2n}\ln(x)={_{_0} {D^{^{\hspace{-4.5mm}C}}}_{_{_x}}}^{\alpha}x^{2n}\ln(x).$$
Now for the Caputo fractional derivative, we have
\begin{eqnarray*}
{_{_0} {D^{^{\hspace{-4.5mm}C}}}_{_{_x}}}^{\alpha}x^{2n}\ln(x)={_0 I_{_x}}^{\!m-\alpha}\left(x^{2n}\ln(x)\right)^{(m)}.
\end{eqnarray*}
But for $x\neq 0$
\begin{eqnarray*}
\left(x^{2n}\ln(x)\right)^{(m)}
&=&\sum_{r=0}^m
{m \choose r}
\frac{d^{m-r}}{dx^{m-r}}x^{2n}\frac{d^r}{dx^r}\ln(x)\\
&=&\frac{\Gamma(2n+1)}{\Gamma(2n-m+1)}x^{2n-m}\ln(x)\\
&&+\sum_{r=1}^m{m \choose r}
\frac{d^{m-r}}{dx^{m-r}}x^{2n}\frac{d^r}{dx^r}\ln(x)\\
&=&\frac{\Gamma(2n+1)}{\Gamma(2n-m+1)}x^{2n-m}\ln(x)\\
&& +\sum_{r=1}^m{m \choose r}
\frac{\Gamma(2n+1)x^{2n-m+r}}{\Gamma(2n-m+r+1)}\frac{{(-1)}^{r-1}{(r-1)!}}{x^r}\\
&=&\frac{\Gamma(2n+1)}{\Gamma(2n-m+1)}x^{2n-m}\ln(x)\\
&&+m!\Gamma(2n+1)x^{2n-m}\sum_{r=1}^m
\frac{{(-1)}^{r-1}}{r(m-r)!\Gamma(2n-m+r+1)}.
\end{eqnarray*}
Then
\begin{eqnarray*}
&&{_{_0} {D^{^{\hspace{-4.5mm}C}}}_{_{_x}}}^{\alpha}x^{2n}\ln(x)\\
&=&\frac{\Gamma(2n+1)}{\Gamma(2n-m+1)}{_0 I_{_x}}^{\!m-\alpha}x^{2n-m}\ln(x)\\
&&+m!\Gamma(2n+1)
\left(\sum_{r=1}^m\frac{{(-1)}^{r-1}}{r(m-r)!\Gamma(2n-m+r+1)}\right){_0
  I_{_x}}^{\!m-\alpha}x^{2n-m}\\
&=&\frac{\Gamma(2n+1)}{\Gamma(2n+1-\alpha)}x^{2n-\alpha}
\left(\vphantom{\sum_{r=1}^m}\ln(x)+\Psi(2n-m+1)-\Psi(2n+1-\alpha)\right.\\
&&\left. +m!\Gamma(2n-m+1)
\sum_{r=1}^m\frac{{(-1)}^{r-1}}{r(m-r)!\Gamma(2n-m+r+1)}\right).
\end{eqnarray*}
\end{proof}
\begin{theorem} 
For $a\neq 0$ and $x>a$ we have 
\begin{eqnarray*}
&&{_a I_{_x}}^{\!\alpha}x^{2n}\ln(x)\\
&=&\frac{x^{2n+\alpha}{(x-a)}^\alpha}{a^\alpha\Gamma(1+\alpha)}
\left(\vphantom{\sum_{k=0}^{\infty}}\right.\\
&&\left.F_{21}\left(\alpha,\alpha+2n+1;\alpha+1;\frac{a-x}{a}\right)
\left(\ln(x)-\Psi(\alpha+2n+1)\right)\right.\\
&&~~~\left.+\alpha
\displaystyle\sum_{k=0}^{\infty}\frac{{(\alpha+2n+1)_k}\Psi(\alpha+2n+k+1){(a-x)}^k}{{(\alpha+k)}a^kk!}\right),\\
&&{_{_a} {D^{^{\hspace{-4.5mm}C}}}_{_{_x}}}^{\alpha}x^{2n}\ln(x)\\
&=&\frac{\Gamma(2n+1)x^{2n-\alpha}{(x-a)}^{m-\alpha}}{a^{m-\alpha}\Gamma(1+m-\alpha)}
\left(\vphantom{\sum_{k=0}^{\infty}}\right.\\
&&\left.F_{21}\left(m-\alpha,2n-\alpha+1;m-\alpha+1;\frac{a-x}{a}\right)\right.\\
&&\left.\left(\frac{\ln(x)-\Psi(2n-\alpha+1)}{\Gamma(2n-m+1)}+m!
\sum_{r=1}^m\frac{{(-1)}^{r-1}}{r(m-r)!\Gamma(2n-m+r+1)}\right)\right.\\
&&\left.+\frac{(m-\alpha)}{\Gamma(n-m+1)}
\sum_{k=0}^{\infty}\frac{{(2n-\alpha+1)}_k\Psi(2n-\alpha+k+1)}{(m-\alpha+k)a^kk!}{(a-x)}^k\right).
\end{eqnarray*}
\end{theorem}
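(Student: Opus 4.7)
The plan is to extend the log-derivative trick used in the previous theorem (for $a=0$) to arbitrary $a\neq 0$, by first obtaining a closed form for ${_a I_{_x}}^{\!\alpha} x^\beta$ that is explicitly analytic in $\beta$, and then differentiating at $\beta=2n$.

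Step 1. Since the integrand of the Riemann-Liouville integral depends smoothly on $\beta$, differentiation under the integral sign yields
\[
{_a I_{_x}}^{\!\alpha}\bigl(x^{2n}\ln x\bigr)=\left.\frac{\partial}{\partial\beta}\right|_{\beta=2n}\!\!{_a I_{_x}}^{\!\alpha} x^{\beta},
\]
so the problem reduces to finding a closed form for ${_a I_{_x}}^{\!\alpha} x^\beta$ depending analytically on $\beta$.

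Step 2. Expand $x^\beta=\sum_{k\ge 0}\binom{\beta}{k}a^{\beta-k}(x-a)^k$ by the binomial series about $a$, apply ${_a I_{_x}}^{\!\alpha}$ termwise using Theorem \ref{th1}, and rewrite $\binom{\beta}{k}=(-1)^k(-\beta)_k/k!$ together with $\Gamma(\alpha+k+1)/\Gamma(\alpha+1)=(\alpha+1)_k$ to obtain
\[
{_a I_{_x}}^{\!\alpha}x^\beta = \frac{a^\beta(x-a)^\alpha}{\Gamma(\alpha+1)}\,F_{21}\!\left(-\beta,1;\alpha+1;\frac{a-x}{a}\right).
\]
Then the Euler transformation (\ref{Frel}) applied with $(a,b,c)=(-\beta,1,\alpha+1)$, together with the symmetry of $F_{21}$ in its first two arguments, rewrites this as
\[
{_a I_{_x}}^{\!\alpha}x^\beta = \frac{x^{\beta+\alpha}(x-a)^\alpha}{a^\alpha\,\Gamma(1+\alpha)}\,F_{21}\!\left(\alpha,\alpha+\beta+1;\alpha+1;\frac{a-x}{a}\right),
\]
placing the entire $\beta$-dependence of the hypergeometric factor in a single Pochhammer parameter.

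Step 3. Differentiate in $\beta$. The prefactor $x^{\beta+\alpha}$ contributes the $\ln x$ term. For the hypergeometric factor I expand it as a series and use
\[
\frac{\partial}{\partial\beta}(\alpha+\beta+1)_k=(\alpha+\beta+1)_k\bigl[\Psi(\alpha+\beta+k+1)-\Psi(\alpha+\beta+1)\bigr],
\]
together with the identity $(\alpha)_k/(\alpha+1)_k=\alpha/(\alpha+k)$, to split $\partial_\beta F_{21}$ into a $\Psi(\alpha+\beta+k+1)$-weighted series minus $\Psi(\alpha+\beta+1)$ times the undifferentiated $F_{21}$. Evaluating at $\beta=2n$ and reassembling yields exactly the stated formula for ${_a I_{_x}}^{\!\alpha} x^{2n}\ln x$.

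Step 4. For the Caputo derivative, combine the relation ${_{_a} {D^{^{\hspace{-4.5mm}C}}}_{_{_x}}}^{\alpha}f={_a I_{_x}}^{\!m-\alpha}f^{(m)}$ with the explicit expression for $(x^{2n}\ln x)^{(m)}$ already computed in the previous theorem, namely a multiple of $x^{2n-m}\ln x$ plus a rational linear combination of the pure power $x^{2n-m}$. Apply ${_a I_{_x}}^{\!m-\alpha}$ termwise: the logarithmic summand is handled by the formula of Step 3 with $\alpha\mapsto m-\alpha$ and $2n\mapsto 2n-m$, and the pure-power summand by the formula of Step 2 with $\beta=2n-m$ and $\alpha\mapsto m-\alpha$. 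Both contributions share the common prefactor $x^{2n-\alpha}(x-a)^{m-\alpha}/\bigl(a^{m-\alpha}\Gamma(1+m-\alpha)\bigr)$ and the same $F_{21}$, so factoring produces the claimed formula.

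The main obstacle is producing a hypergeometric representation that concentrates the $\beta$-dependence in one parameter: the direct binomial expansion yields $F_{21}(-\beta,1;\alpha+1;\cdot)$, whose derivative in $\beta$ is not of the compact form demanded by the theorem, so the Euler identity (\ref{Frel}) is the crucial maneuver that moves $\beta$ into the first parameter, where $\partial_\beta$ produces a clean $\Psi$-weighted series. Interchange of $\partial_\beta$ with the defining sums and with the Riemann-Liouville integral is routine, since the relevant series converge uniformly on compact subsets of $\{|x-a|<|a|\}$ and the identities then extend to the full range $x>a\ne 0$ by analytic continuation.
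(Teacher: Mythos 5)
Your proposal is correct and follows essentially the same route as the paper: represent ${_a I_{_x}}^{\!\alpha}x^\beta$ as $\frac{x^{\alpha+\beta}(x-a)^{\alpha}}{a^\alpha\Gamma(1+\alpha)}F_{21}\left(\alpha,\alpha+\beta+1;\alpha+1;\frac{a-x}{a}\right)$ via the Euler transformation \eref{Frel}, differentiate with respect to the exponent to generate the logarithm and the $\Psi$-weighted series, and treat the Caputo case through ${_a I_{_x}}^{\!m-\alpha}(x^{2n}\ln x)^{(m)}$ with the Leibniz expansion from the $a=0$ theorem. The only (welcome) difference is that you actually derive the starting identity ${_a I_{_x}}^{\!\alpha}x^\beta = \frac{a^\beta(x-a)^\alpha}{\Gamma(\alpha+1)}F_{21}\left(1,-\beta;\alpha+1;\frac{a-x}{a}\right)$ by binomial expansion about $a$, whereas the paper simply quotes it.
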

\begin{proof}
We know that 
\begin{eqnarray*}
{_a I_{_x}}^{\!\alpha}x^\beta=
\frac{{a^\beta(x-a)^\alpha}}{\Gamma(1+\alpha)}F_{21}\left(1,-\beta;\alpha+1;\frac{a-x}{a}\right).
\end{eqnarray*}
Then by using (\ref{Frel}), we get
\begin{eqnarray}\label{nip}
{_a
  I_{_x}}^{\!\alpha}x^\beta=\frac{x^{\alpha+\beta}(x-a)^{\alpha}}{a^\alpha\Gamma(1+\alpha)}
F_{21}\left(\alpha,\alpha+\beta+1;\alpha+1;\frac{a-x}{a}\right).
\end{eqnarray}
Thus
\begin{eqnarray}\label{itps}\nonumber
&&{_a I_{_x}}^{\!\alpha}x^{2n}\ln(x)\\
&=&\frac{1}{2}\frac{d}{dn}
\left(\frac{x^{\alpha+2n}(x-a)^{\alpha}}{a^\alpha\Gamma(1+\alpha)}
F_{21}\left(\alpha,\alpha+2n+1;\alpha+1;\frac{a-x}{a}\right)\right)\nonumber\\
&=&\frac{x^{\alpha}(x-a)^{\alpha}}{2a^\alpha\Gamma(1+\alpha)}\frac{d}{dn}
\left(x^{2n}F_{21}\left(\alpha,\alpha+2n+1;\alpha+1;\frac{a-x}{a}\right)\right)\nonumber\\\nonumber
&=&\frac{x^{\alpha}(x-a)^{\alpha}}{2a^\alpha\Gamma(1+\alpha)}
\left(2x^{2n}\ln(x)F_{21}\left(\alpha,\alpha+2n+1;\alpha+1;\frac{a-x}{a}\right)\right.\\
&&\left.~~+x^{2n}\frac{d}{dn}
\left(F_{21}\left(\alpha,\alpha+2n+1;\alpha+1;\frac{a-x}{a}\right)\right)\right).
\end{eqnarray}
Moreover,
\begin{eqnarray*}
&&\frac{d}{dn}
\left(F_{21}\left(\alpha,\alpha+2n+1;\alpha+1;\frac{a-x}{a}\right)\right)\\
&=&\frac{d}{dn}\left(
\displaystyle\sum_{k=0}^{\infty}\frac{{(\alpha)}_k{(\alpha+2n+1)}_k}%
{{(\alpha+1)}_k}\frac{{(a-x)}^k}{a^kk!}\right),
\end{eqnarray*}
and so
\begin{eqnarray*}
&&\frac{d}{dn}\left(F_{21}\left(\alpha,\alpha+2n+1;\alpha+1;\frac{a-x}{a}\right)\right)\\\nonumber
&=&\displaystyle\sum_{k=0}^{\infty}\frac{{(\alpha)}_k{(a-x)}^k}{{(\alpha+1)}_ka^kk!\Gamma(\alpha+2n+1)^2}\\
&&\left(
2\Gamma'(\alpha+2n+k+1)\Gamma(\alpha+2n+1)\right.\\
&&\left.-2\Gamma'(\alpha+2n+1)\Gamma(\alpha+2n+k+1)\right),\\\nonumber
&=&\displaystyle\sum_{k=0}^{\infty}\frac{2{(\alpha)}_k{(a-x)}^k}{{(\alpha+1)}_ka^kk!\Gamma(\alpha+2n+1)}\\
&&
\left(\Gamma(\alpha+2n+k+1)
\left(\Psi(\alpha+2n+k+1)-\Psi(\alpha+2n+1)\right)\right),\\\nonumber
&=&\displaystyle\sum_{k=0}^{\infty}\frac{2{(\alpha)}_k{(a-x)}^k}{{(\alpha+1)}_ka^kk!}\\
&&\left({(\alpha+2n+1)}_k\left(\Psi(\alpha+2n+k+1)-\Psi(\alpha+2n+1)\right)\right),\\
&=&2\alpha\displaystyle\sum_{k=0}^{\infty}\frac{{(\alpha+2n+1)_k}
\Psi(\alpha+2n+k+1){(a-x)}^k}{(\alpha+k)a^kk!}\\
&&-2\Psi(\alpha+2n+1)F_{21}\left(\alpha,\alpha+2n+1;\alpha+1;\frac{a-x}{a}\right).
\end{eqnarray*}
Then by substituting the relation above into (\ref{itps}) and simplifying the expressions, we obtain 
\begin{eqnarray}\label{iat}\nonumber
&& {_a I_{_x}}^{\!\alpha}x^{2n}\ln(x)\\\nonumber
&=&\frac{x^{2n+\alpha}{(x-a)}^\alpha}{a^\alpha\Gamma(1+\alpha)}
\left(\vphantom{\sum_{k=0}^{\infty}} \right.\\\nonumber
&&\left.F_{21}\left(\alpha,\alpha+2n+1;\alpha+1;\frac{a-x}{a}\right)
\left(\ln(x)-\Psi(\alpha+2n+1)\right)\right.\\
&&\left.+\alpha\displaystyle\sum_{k=0}^{\infty}\frac{{(\alpha+2n+1)_k}
\Psi(\alpha+2n+k+1){(a-x)}^k}{{(\alpha+k)}a^kk!}\right).
\end{eqnarray}
Now for the Caputo fractional derivative, we have
\begin{eqnarray*}
{_{_a} {D^{^{\hspace{-4.5mm}C}}}_{_{_x}}}^{\alpha}x^{2n}\ln(x)={_a I_{_x}}^{\!m-\alpha}\left(x^{2n}\ln(x)\right)^{(m)}.
\end{eqnarray*}
Then by using (\ref{iat}) and (\ref{nip}), and after simplifying the expressions,  we have
\begin{eqnarray*}
&&{_{_a} {D^{^{\hspace{-4.5mm}C}}}_{_{_x}}}^{\alpha}x^{2n}\ln(x)\\
&=&\frac{\Gamma(2n+1)}{\Gamma(2n-m+1)}{_a I_{_x}}^{\!m-\alpha}x^{2n-m}\ln(x)\\
&&+m!\Gamma(2n+1)\left(\sum_{r=1}^m
\frac{{(-1)}^{r-1}}{r(m-r)!\Gamma(2n-m+r+1)}\right){_a
  I_{_x}}^{\!m-\alpha}x^{2n-m}\\
&=&\frac{\Gamma(2n+1)x^{2n-\alpha}{(x-a)}^{m-\alpha}}{a^{m-\alpha}\Gamma(m-\alpha+1)}
\left(\vphantom{\sum_{r=1}^m} \right.\\
&&\left.F_{21}\left(m-\alpha,2n-\alpha+1;m-\alpha+1;\frac{a-x}{a}\right)\right.\\
&&\left.\left(\frac{\ln(x)-\Psi(2n-\alpha+1)}{\Gamma(2n-m+1)}
+m!\sum_{r=1}^m\frac{{(-1)}^{r-1}}{r(m-r)!\Gamma(2n-m+r+1)}\right)\right.\\
&&\left.+\frac{(m-\alpha)}{\Gamma(2n-m+1)}
\sum_{k=0}^{\infty}\frac{{(2n-\alpha+1)}_k\Psi(2n-\alpha+k+1)}{(m-\alpha+k)a^kk!}{(a-x)}^k\right).
\end{eqnarray*}
\end{proof}
\begin{remark}
Similarly, one can show that for $x<b$
\begin{eqnarray*}
&&{_x I_{_b}}^{\!\alpha}x^{2n}\ln(x)\\
&=&
\frac{x^{2n+\alpha}{(b-x)}^\alpha}{b^\alpha\Gamma(1+\alpha)}
\left(\vphantom{\sum_{k=0}^{\infty}} \right.\\
&&\left. F_{21}\left(\alpha,\alpha+2n+1;\alpha+1;\frac{b-x}{b}\right)
\left(\ln(x)-\Psi(\alpha+2n+1)\right)\right.\\
&&~~~\left.+\alpha\displaystyle\sum_{k=0}^{\infty}\frac{{(\alpha+2n+1)_k}
\Psi(\alpha+2n+k+1){(b-x)}^k}{{(\alpha+k)}b^kk!}\right),
\end{eqnarray*}
\begin{eqnarray*}
&&{_{_x}{D^{^{\hspace{-4.5mm}C}}}_{_{_b}}}^{\alpha}x^{2n}\ln(x)\\
&=&\frac{{(-1)}^{m}\Gamma(2n+1)x^{2n-\alpha}{(b-x)}^{m-\alpha}}{b^{m-\alpha}\Gamma(m-\alpha+1)}
\left( \vphantom{\sum_{r=1}^m} \right.\\
&&\left. F_{21}\left(m-\alpha,2n-\alpha+1;m-\alpha+1;\frac{b-x}{b}\right)\right.\\
&&\left.\left(\frac{\ln(x)-\Psi(2n-\alpha+1)}{\Gamma(2n-m+1)}
+m!\sum_{r=1}^m\frac{{(-1)}^{r-1}}{r(m-r)!\Gamma(2n-m+r+1)}\right)\right.\\
&&\left.+\frac{(m-\alpha)}{\Gamma(2n-m+1)}
\sum_{k=0}^{\infty}\frac{{(2n-\alpha+1)}_k\Psi(2n-\alpha+k+1)}{(m-\alpha+k)a^kk!}{(b-x)}^k\right).
\end{eqnarray*}
\end{remark}
\subsection{Matern}
For $\phi(x)=x^\nu K_\nu(x)$ with non-integer $\nu>0,$ the following results hold.
\begin{theorem}
For $x>a$ we have
\begin{eqnarray*}
&& {_a I_{_x}}^{\!\alpha}{x}^\nu K_\nu(x)\\
&=&\frac{\pi}{2\sin\pi\nu}
\left(\frac{2^\nu x^\alpha}{\Gamma(1-\nu)\Gamma(1+\alpha)}
F_{23}\left(\frac{1}{2},1;1-\nu,\frac{\alpha+1}{2},\frac{\alpha+2}{2};\frac{x^2}{4}\right)\right.\\
&&~~-\frac{2^\nu\Gamma(\nu+\frac{1}{2})}{\sqrt{\pi}\Gamma(\alpha+2\nu+1)}x^{\alpha+2\nu}
F_{12}\left(\nu+\frac{1}{2};\frac{\alpha+2\nu+1}{2};\frac{\alpha}{2}+\nu+1;\frac{x^2}{4}\right)\\
&&~~-\frac{a2^\nu x^{\alpha-1}}{\Gamma(\alpha)}
\sum_{k=0}^\infty
\frac{a^{2k}F_{21}\left(2k+1,1-\alpha;2k+2;\frac{a}{x}\right)}{(2k+1)\Gamma(-\nu+k+1)4^kk!}\\
&&~~+\frac{a^{2\nu+1}}{2^\nu\Gamma(\alpha)}x^{\alpha-1}
\sum_{k=0}^\infty\left(\frac{a^{2k}}{4^kk!(2\nu+k+1)\Gamma(\nu+k+1)}\right.\\
&&~~~~\left.\left.F_{21}(2\nu+2k+1,1-\alpha;2\nu+2k+2;\frac{x^2}{4})\right)\right).
\end{eqnarray*}
\end{theorem}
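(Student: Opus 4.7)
The plan is to convert $x^\nu K_\nu(x)$ into two power series and then apply the Riemann-Liouville integral termwise, reducing the problem to computing $_a I_{_x}^\alpha x^\beta$ for generic real exponents $\beta$ and resumming. For non-integer $\nu$ we have the standard representation
$$
K_\nu(x) \;=\; \frac{\pi}{2\sin(\pi\nu)}\bigl(I_{-\nu}(x)-I_\nu(x)\bigr),\qquad
I_\mu(x) \;=\; \sum_{k=0}^\infty \frac{(x/2)^{2k+\mu}}{k!\,\Gamma(\mu+k+1)},
$$
so multiplying by $x^\nu$ gives two absolutely convergent series, one in powers $x^{2k}$ with coefficients $\frac{1}{k!\,\Gamma(-\nu+k+1)\,2^{2k-\nu}}$, the other in powers $x^{2k+2\nu}$ with coefficients $\frac{1}{k!\,\Gamma(\nu+k+1)\,2^{2k+\nu}}$. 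The prefactor $\frac{\pi}{2\sin\pi\nu}$ in the statement will simply be carried through to the end.

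The second step is a single-power formula. For any real $\beta$ with $\beta>-1$, the substitution $u=\tau/x$ in the definition of $_a I_{_x}^\alpha$ yields
$$
{}_a I_{_x}^{\!\alpha}x^\beta \;=\; \frac{x^{\alpha+\beta}}{\Gamma(\alpha)}\Bigl(B(\alpha,\beta+1)-b\bigl(\beta+1,\alpha;\tfrac{a}{x}\bigr)\Bigr),
$$
and inserting the identity \eqref{bf} for the incomplete Beta function $b(\cdot,\cdot;\cdot)$ gives
$$
{}_a I_{_x}^{\!\alpha}x^\beta \;=\; \frac{\Gamma(\beta+1)\,x^{\alpha+\beta}}{\Gamma(\alpha+\beta+1)} \;-\; \frac{a^{\beta+1}x^{\alpha-1}}{(\beta+1)\Gamma(\alpha)}\,F_{21}\!\left(\beta+1,1-\alpha;\beta+2;\tfrac{a}{x}\right).
$$
Substituting $\beta=2k$ into the first term of the right-hand side, multiplying by the coefficients from the $x^\nu I_{-\nu}$ series, and summing over $k$ produces Term~1, after converting the Pochhammer symbols via the duplication formula $(1)_{2k}=4^k(\tfrac12)_k(1)_k$ and $(1+\alpha)_{2k}=4^k(\tfrac{1+\alpha}{2})_k(\tfrac{2+\alpha}{2})_k$. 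Using $\beta=2k+2\nu$ from the $x^\nu I_\nu$ series, the analogous resummation — now also invoking $\Gamma(2k+2\nu+1)=\tfrac{2^{2k+2\nu}}{\sqrt{\pi}}\Gamma(k+\nu+\tfrac12)\Gamma(k+\nu+1)$ from \eqref{gam2} to cancel against $\Gamma(\nu+k+1)$ — produces Term~2. Finally, the incomplete-Beta remainder $-\frac{a^{\beta+1}x^{\alpha-1}}{(\beta+1)\Gamma(\alpha)}F_{21}(\ldots)$ contributes Term~3 when $\beta=2k$ and Term~4 when $\beta=2k+2\nu$, the indices and Pochhammer shifts matching by direct inspection.

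The main obstacle is not any single step but the combinatorial bookkeeping required to resum the four infinite double series and recognise each as the claimed hypergeometric function. In particular, one must be careful justifying the interchange of the sums with the integral operator $_a I_{_x}^\alpha$; this can be done by observing that for fixed $x$ and $a$ the series defining $x^\nu K_\nu(x)$ converges uniformly on $[a,x]$, so that termwise integration against the integrable kernel $(x-\tau)^{\alpha-1}$ is valid. A secondary subtlety is that the decomposition of $_a I_{_x}^\alpha x^\beta$ above requires $\beta>-1$, which is ensured for the $x^\nu I_{-\nu}$ series by $\nu\in(0,1)$; for $\nu\ge 1$ one splits off finitely many divergent low-order terms of $I_{-\nu}$ and treats them by the Taylor-expansion technique used in the Powers section, leaving the remaining tail to the general formula.
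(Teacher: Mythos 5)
Your proposal follows the paper's proof essentially step for step: the same splitting $x^\nu K_\nu(x)=\frac{\pi}{2\sin\pi\nu}\left(x^\nu I_{-\nu}(x)-x^\nu I_\nu(x)\right)$ into two power series, the same substitution $u=\tau/x$ turning each termwise integral into a complete-minus-incomplete Beta function, and the same use of the incomplete-Beta-to-$F_{21}$ identity together with the Legendre duplication of Pochhammer symbols to resum the four series into the $F_{23}$, $F_{12}$ and two $F_{21}$ tails. The only differences are cosmetic: you package the single-power computation as a standalone lemma where the paper does it inline, and your closing caveat about needing $\beta>-1$ for the $x^\nu I_{-\nu}$ series is moot, since after multiplication by $x^\nu$ that series has exponents $2k\ge 0$ for every $\nu>0$.
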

\begin{proof}
We know that
\begin{eqnarray*}
{_a I_{_x}}^{\!\alpha}{x}^\nu K_\nu(x)=\frac{\pi}{2\sin(\pi\nu)}\left({_a I_{_x}}^{\!\alpha}{x}^\nu J_{-\nu}(x)-{_a I_{_x}}^{\!\alpha}{x}^\nu J_{\nu}(x)\right),
\end{eqnarray*}
where
\begin{eqnarray*}
J_{\nu}(x)=\left(\frac{x}{2
}\right)^\nu\sum_{k=0}^\infty\frac{{\left(\frac{x^2}{4}\right)}^k}{k!
\Gamma(\nu+k+1)}.
\end{eqnarray*}
Now
\begin{eqnarray*}
&&{_a I_{_x}}^{\!\alpha}{x}^\nu J_{\nu}(x)\\
&=&\frac{1}{\Gamma(\alpha)}\int_a^x (x-\tau)^{\alpha-1}\tau^{\nu} 
J_{\nu}(\tau) d\tau \\
&=&\frac{1}{\Gamma(\alpha)}\int_a^x
(x-\tau)^{\alpha-1}\tau^{\nu}\left(\frac{\tau}{2	}\right)^\nu
\sum_{k=0}^\infty\frac{{\left(\frac{\tau^2}{4}\right)}^k}{k!\Gamma(\nu+k+1)}d\tau\\
&=&\sum_{k=0}^\infty\frac{1}{\Gamma(\alpha)2^\nu4^kk!\Gamma(\nu+k+1)}\int_a^x(x-\tau)^{\alpha-1}\tau^{2\nu+2k}d\tau.
\end{eqnarray*}
By the change of variable $u=\frac{\tau}{x}$, we have
\begin{eqnarray*}
&&\int_a^x (x-\tau)^{\alpha-1}\tau^{2\nu+2k}d\tau\\
&=&x^{\alpha+2\nu+2k}\int_{\frac{a}{x}}^1{(1-u)}^{\alpha-1}u^{2\nu+2k}du\\
&=&x^{\alpha+2\nu+2k}
\left(\frac{\Gamma(2\nu+2k+1)\Gamma(\alpha)}%
{\Gamma(\alpha+2\nu+2k+1)}-b\left(2\nu+2k+1,\alpha;\frac{a}{x}\right)\right).
\end{eqnarray*}
Therefore
\begin{eqnarray*}
&&{_a I_{_x}}^{\!\alpha}{x}^\nu J_{\nu}(x)\\
&=&\sum_{k=0}^\infty\frac{x^{\alpha+2\nu+2k}}{2^\nu4^kk!\Gamma(\alpha)\Gamma(\nu+k+1)}\\
&&
\left(\frac{\Gamma(2\nu+2k+1)\Gamma(\alpha)}{\Gamma(\alpha+2\nu+2k+1)}
-b\left(2\nu+2k+1,\alpha;\frac{a}{x}\right)\right)\\
&=&\sum_{k=0}^\infty\frac{\Gamma(2\nu+2k)x^{\alpha+2\nu+2k}}%
{2^{\nu-1}4^kk!\Gamma(\nu+k)\Gamma(\alpha+2\nu+2k+1)}\\
&&
-\sum_{k=0}^\infty\frac{b(2\nu+2k+1,\alpha;\frac{a}{x})x^{\alpha+2\nu+2k}}%
{2^\nu4^kk!\Gamma(\alpha)\Gamma(\nu+k+1)}.
\end{eqnarray*}
Now by using (\ref{gam2}) and (\ref{bf}) we get
\begin{eqnarray*}
&&{_a I_{_x}}^{\!\alpha}{x}^\nu J_{\nu}(x)\\
&=&\sum_{k=0}^\infty
\frac{2^\nu\Gamma(\nu+k+\frac{1}{2})x^{\alpha+2\nu+2k}}%
{\sqrt{\pi}\Gamma(\alpha+2\nu+2k+1)k!}\\
&&~~-\frac{a^{2\nu+1}x^{\alpha-1}}{2^\nu\Gamma(\alpha)}
\sum_{k=0}^\infty\frac{a^{2k}F_{21}(2\nu+2k+1,1-\alpha;2\nu+2k+2;\frac{a}{x})}%
{4^kk!(2\nu+2k+1)\Gamma(\nu+k+1)}\\
&=&\frac{x^{\alpha+2\nu}2^\nu}{\sqrt{\pi}}
\sum_{k=0}^\infty\frac{\Gamma(\nu+k+\frac{1}{2})x^{2k}}{\Gamma(\alpha+2\nu+2k+1)k!}\\
&&~~-\frac{a^{2\nu+1}x^{\alpha-1}}{2^\nu\Gamma(\alpha)}
\sum_{k=0}^\infty\frac{a^{2k}F_{21}(2\nu+2k+1,1-\alpha;2\nu+2k+2;\frac{a}{x})}%
{4^kk!(2\nu+2k+1)\Gamma(\nu+k+1)}\\
&=&\frac{x^{\alpha+2\nu}2^\nu\Gamma(\nu+\frac{1}{2})}{\sqrt{\pi}\Gamma(\alpha+2\nu+1)}
\sum_{k=0}^\infty\frac{{(\nu+\frac{1}{2})}_kx^{2k}}{{(\alpha+2\nu+1)}_{2k}k!}\\
&&~~-\frac{a^{2\nu+1}x^{\alpha-1}}{2^\nu\Gamma(\alpha)}
\sum_{k=0}^\infty\frac{a^{2k}F_{21}(2\nu+2k+1,1-\alpha;2\nu+2k+2;\frac{a}{x})}%
{4^kk!(2\nu+2k+1)\Gamma(\nu+k+1)}\\
&=&\frac{x^{\alpha+2\nu}2^\nu\Gamma(\nu+\frac{1}{2})}{\sqrt{\pi}\Gamma(\alpha+2\nu+1)}
\sum_{k=0}^\infty
\frac{{(\nu+\frac{1}{2})}_kx^{2k}}{{(\frac{\alpha}{2}+\nu+\frac{1}{2})}_{k}%
{(\frac{\alpha}{2}+\nu+1)}_k4^{k}k!}\\
&&~~-\frac{a^{2\nu+1}x^{\alpha-1}}{2^\nu\Gamma(\alpha)}
\sum_{k=0}^\infty\frac{a^{2k}F_{21}(2\nu+2k+1,1-\alpha;2\nu+2k+2;\frac{a}{x})}%
{4^kk!(2\nu+2k+1)\Gamma(\nu+k+1)}\\
&=&\frac{2^\nu\Gamma(\nu+\frac{1}{2})}{\sqrt{\pi}\Gamma(\alpha+2\nu+1)}x^{\alpha+2\nu}
F_{12}\left(\nu+\frac{1}{2};\frac{\alpha}{2}+\nu+\frac{1}{2},
\frac{\alpha}{2}+\nu+1;\frac{x^2}{4}\right)\\
&&~~-\frac{a^{2\nu+1}x^{\alpha-1}}{2^\nu\Gamma(\alpha)}
\sum_{k=0}^\infty\frac{a^{2k}F_{21}(2\nu+2k+1,1-\alpha;2\nu+2k+2;\frac{a}{x})}%
{4^kk!(2\nu+2k+1)\Gamma(\nu+k+1)}.
\end{eqnarray*}
Moreover,
\begin{eqnarray*}
&& {_a I_{_x}}^{\!\alpha}{x}^\nu J_{-\nu}(x)\\
&=&\frac{1}{\Gamma(\alpha)}\int_a^x (x-\tau)^{\alpha-1}\tau^{\nu} J_{-\nu}(\tau) d\tau\\
&=&\frac{1}{\Gamma(\alpha)}
\int_a^x (x-\tau)^{\alpha-1}\tau^{\nu}\left(\frac{\tau}{2	}\right)^{-\nu}
\sum_{k=0}^\infty\frac{{\left(\frac{\tau^2}{4}\right)}^k}{k!\Gamma(-\nu+k+1)}d\tau\\
&=&\sum_{k=0}^\infty\frac{2^\nu}{\Gamma(\alpha)4^kk!\Gamma(-\nu+k+1)}\int_a^x(x-\tau)^{\alpha-1}\tau^{2k}d\tau\\
&=&\sum_{k=0}^\infty\frac{2^\nu
  x^{\alpha+2k}}{\Gamma(\alpha)4^kk!\Gamma(-\nu+k+1)}
\left(\frac{\Gamma(2k+1)\Gamma(\alpha)}{\Gamma(\alpha+2k+1)}-b(2k+1,\alpha;\frac{a}{x})\right)\\
&=&2^\nu
x^\alpha\sum_{k=0}^\infty\frac{\Gamma(2k+1)x^{2k}}{4^kk!\Gamma(-\nu+k+1)\Gamma(\alpha+2k+1)}\\
&&-\sum_{k=0}^\infty\frac{2^\nu
  x^{\alpha+2k}b(2k+1,\alpha;\frac{a}{x})}{4^kk!\Gamma(\alpha)\Gamma(-\nu+k+1)}\\
&=&\frac{2^\nu x^\alpha}{\Gamma(1-\nu)\Gamma(1+\alpha)}
\sum_{k=0}^\infty\frac{{(1)}_{2k}x^{2k}}{{(1-\nu)}_{k}{(1+\alpha)}_{2k}4^kk!}\\
&&-\frac{2^\nu x^{\alpha-1}a}{\Gamma(\alpha)}
\sum_{k=0}^\infty\frac{a^{2k}F_{21}(2k+1,1-\alpha;2k+2;\frac{a}{x})}{4^k(2k+1)\Gamma(-\nu+k+1)k!}\\
&&=\frac{2^\nu x^\alpha}{\Gamma(1-\nu)\Gamma(1+\alpha)}
\sum_{k=0}^\infty\frac{{(\frac{1}{2})}_{k}{(1)}_kx^{2k}}{{(1-\nu)}_{k}%
{(\frac{1+\alpha}{2})}_{k}{(\frac{2+\alpha}{2})}_{k}4^kk!}\\
&&-\frac{2^\nu x^{\alpha-1}a}{\Gamma(\alpha)}
\sum_{k=0}^\infty\frac{a^{2k}F_{21}(2k+1,1-\alpha;2k+2;\frac{a}{x})}{4^k(2k+1)\Gamma(-\nu+k+1)k!}\\
&=&\frac{2^\nu x^\alpha}{\Gamma(1-\nu)\Gamma(1+\alpha)}
F_{23}\left(\frac{1}{2},1;1-\nu,\frac{1+\alpha}{2},\frac{2+\alpha}{2};\frac{x^2}{4}\right)\\
&&-\frac{2^\nu
  x^{\alpha-1}a}{\Gamma(\alpha)}\sum_{k=0}^\infty
\frac{a^{2k}F_{21}(2k+1,1-\alpha;2k+2;\frac{a}{x})}{4^k(2k+1)\Gamma(-\nu+k+1)k!}.
\end{eqnarray*}
Thus
\begin{eqnarray*}
&&{_a I_{_x}}^{\!\alpha}{x}^\nu K_\nu(x)\\
&=&\frac{\pi}{2\sin\pi\nu}
\left(\frac{2^\nu x^\alpha}{\Gamma(1-\nu)\Gamma(1+\alpha)}
F_{23}\left(\frac{1}{2},1;1-\nu,\frac{\alpha+1}{2},\frac{\alpha+2}{2};\frac{x^2}{4}\right)\right.\\
&&~~-\frac{2^\nu\Gamma(\nu+\frac{1}{2})}{\sqrt{\pi}\Gamma(\alpha+2\nu+1)}x^{\alpha+2\nu}
F_{12}\left(\nu+\frac{1}{2};\frac{\alpha+2\nu+1}{2};\frac{\alpha}{2}+\nu+1;\frac{x^2}{4}\right)\\
&&~~-\frac{2^\nu x^{\alpha-1}a}{\Gamma(\alpha)}
\sum_{k=0}^\infty\frac{a^{2k}F_{21}(2k+1,1-\alpha;2k+2;\frac{a}{x})}{(2k+1)\Gamma(-\nu+k+1)4^kk!}\\
&&~~+\frac{a^{2\nu+1}}{2^\nu\Gamma(\alpha)}x^{\alpha-1}
\sum_{k=0}^\infty\left(\frac{a^{2k}}{4^kk!(2\nu+k+1)\Gamma(\nu+k+1)}\right.\\
&&~~~~\left.\left.F_{21}\left(2\nu+2k+1,1-\alpha;2\nu+2k+2;\frac{x^2}{4}\right)\right)\right).
\end{eqnarray*} 
\end{proof}
\begin{theorem}
For $x>a$ we have
\begin{eqnarray*}
{_{_a} {D^{^{\hspace{-4.5mm}C}}}_{_{_x}}}^{\alpha}{x}^\nu K_\nu(x)={(-1)}^m{_a I_{_x}}^{\!m-\alpha}{x}^{\nu-m} K_{\nu-m}(x).
\end{eqnarray*}
\end{theorem}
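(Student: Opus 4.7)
My plan is to reduce the computation to one already carried out in the Preliminaries. The Caputo definition yields
$$\,{_{_a} {D^{^{\hspace{-4.5mm}C}}}_{_{_x}}}^{\alpha} f(x) = {_a I_{_x}}^{\!m-\alpha}\bigl[f^{(m)}(x)\bigr],$$
so it suffices to evaluate the ordinary $m$-th derivative of $\phi(x) = x^\nu K_\nu(x)$ in closed form and then substitute.

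The key ingredient is the iterated-derivative identity
$$\frac{d^m}{dx^m}\bigl(x^\nu K_\nu(x)\bigr) = (-1)^m\, x^{\nu-m} K_{\nu-m}(x),$$
which I would establish by induction on $m$. The base case $m=1$ is the Bessel recurrence $\frac{d}{dx}\bigl(x^\nu K_\nu(x)\bigr) = -x^\nu K_{\nu-1}(x)$ already listed among the special-function properties in Section~2. For the inductive step I would differentiate the right-hand side of the induction hypothesis, reapply the base identity at the shifted order $\nu-m$, and rewrite the intermediate terms using the Bessel recurrences so as to preserve the factorized shape $x^{\nu-k}K_{\nu-k}(x)$ at every stage. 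Once the derivative formula is in hand, inserting it into the Caputo definition and using the linearity of ${_a I_{_x}}^{\!m-\alpha}$ to pull the scalar $(-1)^m$ outside produces the asserted identity in a single line.

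The main obstacle is the inductive step for the derivative formula. The base rule lowers the index of the Bessel factor from $\nu$ to $\nu-1$ but leaves the power $x^\nu$ unchanged, so a naive iteration does not immediately return to the clean product form $x^{\nu-m}K_{\nu-m}(x)$: one obtains mixed terms in which the power and the Bessel index are out of sync. To close the induction I expect to invoke the standard recurrences $K'_\nu(x) = -K_{\nu-1}(x) - (\nu/x)\,K_\nu(x)$ and $K_{\nu+1}(x) - K_{\nu-1}(x) = (2\nu/x)\,K_\nu(x)$ in order to absorb the mismatched factor of $x$ at each stage and collapse the expression back to the required factorized form; this bookkeeping with the Bessel recurrences is where the bulk of the technical work will sit.
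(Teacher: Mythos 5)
The difficulty you flag in your final paragraph is not mere bookkeeping: it is where the argument genuinely breaks down, and it cannot be repaired. The identity
\[
\frac{d^m}{dx^m}\bigl(x^\nu K_\nu(x)\bigr)=(-1)^m\,x^{\nu-m}K_{\nu-m}(x)
\]
on which your entire reduction rests is false for every $m\ge 1$. The correct iterated form of the listed rule is
\[
\Bigl(\tfrac{1}{x}\tfrac{d}{dx}\Bigr)^{m}\bigl(x^\nu K_\nu(x)\bigr)=(-1)^m\,x^{\nu-m}K_{\nu-m}(x),
\]
i.e.\ the operator that reproduces the factorized shape is $\frac{1}{x}\frac{d}{dx}$, not $\frac{d}{dx}$. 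Already at $m=1$ the base identity gives $\frac{d}{dx}\bigl(x^\nu K_\nu(x)\bigr)=-x^{\nu}K_{\nu-1}(x)=-x\cdot x^{\nu-1}K_{\nu-1}(x)$, which carries an extra factor of $x$ relative to the target $-x^{\nu-1}K_{\nu-1}(x)$; at $m=2$ one computes $\frac{d^2}{dx^2}\bigl(x^\nu K_\nu(x)\bigr)=-x^{\nu-1}K_{\nu-1}(x)+x^{\nu}K_{\nu-2}(x)$, which is not proportional to $x^{\nu-2}K_{\nu-2}(x)$. A concrete check: for $\nu=\tfrac12$ one has $x^{1/2}K_{1/2}(x)=\sqrt{\pi/2}\,e^{-x}$, whose second derivative is $\sqrt{\pi/2}\,e^{-x}$, whereas $(-1)^2x^{-3/2}K_{-3/2}(x)=x^{-3/2}K_{3/2}(x)=\sqrt{\pi/2}\,x^{-2}\bigl(1+x^{-1}\bigr)e^{-x}$. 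No combination of the recurrences $K'_\nu=-K_{\nu-1}-(\nu/x)K_\nu$ and $K_{\nu+1}-K_{\nu-1}=(2\nu/x)K_\nu$ can close your induction, because the two sides are simply different functions.

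You should also be aware that the paper's own proof is a one-line assertion of exactly the derivative formula you set out to prove, offered without justification, so your instinct about where the real work lies is sound --- but the work cannot be completed, and the theorem as stated is itself incorrect (even for $0<\alpha<1$, where $m=1$, the right-hand side is off by a factor of $x$ inside the fractional integral). What \emph{is} true, and all that the Caputo definition delivers, is ${_a I_{_x}}^{\!m-\alpha}\bigl[\bigl(x^\nu K_\nu(x)\bigr)^{(m)}\bigr]$ with the genuine $m$-th derivative inserted; by Leibniz' rule and the recurrences this is a sum of terms of the form $x^{\nu-j}K_{\nu-k}(x)$ with $j\ne k$ in general, not the single product $x^{\nu-m}K_{\nu-m}(x)$. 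A correct version of the statement would either replace $\frac{d^m}{dx^m}$ by $\bigl(\frac{1}{x}\frac{d}{dx}\bigr)^m$ (which changes the operator being applied and hence the theorem) or replace the right-hand side by that multi-term expansion.
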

\begin{proof}
By definition, we have
\begin{eqnarray*}
{_{_a} {D^{^{\hspace{-4.5mm}C}}}_{_{_x}}}^{\alpha}{x}^\nu K_\nu(x)&&\!\!\!={_a I_{_x}}^{\!m-\alpha}{({x}^\nu K_\nu(x))}^{(m)}={(-1)}^m{_a I_{_x}}^{\!m-\alpha}{{x}^{\nu-m} K_{\nu-m}(x)}.
\end{eqnarray*}
\end{proof}
\begin{remark}
For the special case $a=0$ and $x>0$ we have
\begin{eqnarray*}
&&{_0 I_{_x}}^{\!\alpha}{x}^\nu K_\nu(x)\\
&=&\frac{\pi}{2\sin\pi\nu}
\left(\frac{2^\nu x^\alpha}{\Gamma(1-\nu)\Gamma(1+\alpha)}
F_{23}\left(\frac{1}{2},1;1-\nu,\frac{\alpha+1}{2},\frac{\alpha+2}{2};\frac{x^2}{4}\right)\right.\\
&&~~\left.-\frac{2^\nu\Gamma(\nu+\frac{1}{2})}{\sqrt{\pi}\Gamma(\alpha+2\nu+1)}x^{\alpha+2\nu}
F_{12}\left(\nu+\frac{1}{2};\frac{\alpha+2\nu+1}{2};\frac{\alpha}{2}+\nu+1;\frac{x^2}{4}\right)\right).
\end{eqnarray*}
\begin{eqnarray*}
{_{_0} {D^{^{\hspace{-4.5mm}C}}}_{_{_x}}}^{\alpha}{x}^\nu K_\nu(x)={(-1)}^m{_0 I_{_x}}^{\!m-\alpha}{x}^{\nu-m} K_{\nu-m}(x).
\end{eqnarray*} 
\end{remark}
\begin{remark}
Similarly, one can show that for $x<b$ 
\begin{eqnarray*}
&&{_x I_{_b}}^{\!\alpha}{x}^\nu K_\nu(x)\\
&=&
\frac{{(-1)}^\alpha\pi}{2\sin\pi\nu}
\left(\frac{2^\nu x^\alpha}{\Gamma(1-\nu)\Gamma(1+\alpha)}
F_{23}(\frac{1}{2},1;1-\nu,\frac{\alpha+1}{2},\frac{\alpha+2}{2};\frac{x^2}{4})\right.\\
&&~~-\frac{2^\nu\Gamma(\nu+\frac{1}{2})}{\sqrt{\pi}\Gamma(\alpha+2\nu+1)}x^{\alpha+2\nu}
F_{12}(\nu+\frac{1}{2};\frac{\alpha+2\nu+1}{2};\frac{\alpha}{2}+\nu+1;\frac{x^2}{4})\\
&&~~-\frac{2^\nu x^{\alpha-1}b}{\Gamma(\alpha)}
\sum_{k=0}^\infty\frac{b^{2k}F_{21}(2k+1,1-\alpha;2k+2;\frac{b}{x})}{(2k+1)\Gamma(-\nu+k+1)4^kk!}\\
&&~~+\frac{b^{2\nu+1}}{2^\nu\Gamma(\alpha)}x^{\alpha-1}
\sum_{k=0}^\infty\left(\frac{b^{2k}}{4^kk!(2\nu+k+1)\Gamma(\nu+k+1)}\right.\\
&&~~~~\left.\left.F_{21}\left(2\nu+2k+1,1-\alpha;2\nu+2k+2;\frac{x^2}{4}\right)\right)\right),
\end{eqnarray*}
\begin{eqnarray*}
{_{_x} {D^{^{\hspace{-4.5mm}C}}}_{_{_b}}}^{\alpha}{x}^\nu K_\nu(x)={(-1)}^{-\alpha}{_{_a} {D^{^{\hspace{-4.5mm}C}}}_{_{_x}}}^{\alpha}{x}^\nu K_\nu(x).
\end{eqnarray*}
\end{remark}
\section{Application }
In this section we apply the results of the previous section to solve two fractional differential equations. The first one is a fractional ODE which is solved by the RBF collocation method and the second one is a fractional PDE which is solved by the method of lines based on the spatial trial spaces spanned by the Lagrange basis associated to the RBFs. In both cases, we work with scaled RBFs, i.e.
\begin{eqnarray*}
\phi\left(\frac{|x-y|}{c}\right),
\end{eqnarray*}
where the RBF scale $c$ controls their flatness. The infinite sums appearing in the previous formulas are truncated once the terms are smaller in magnitude than machine precision.
\subsection{Test problem 1}
Consider the following fractional ODE \cite{podlubny:1999}:
\begin{eqnarray*}
&&{_0 D_{_t}}^{\!\!3/2}u(t)+u(t)=f(t),\quad t\in(0,T],\\
&&u(0)=u'(0)=0.
\end{eqnarray*}
Let $t_i,~1\leq i\leq n$ be the equidistant discretization points in the interval $[0,T]$ such that $t_1=0$ and $t_n=T$. Then the approximate solution can be written as 
\begin{eqnarray*}
u(t)=\sum_{j=0}^n \lambda_j\phi(|t-t_j|),
\end{eqnarray*}
where $t_j$'s are known as centers. The unknown parameters $\lambda_j$ are to be determined by the collocation method. Therefore, we get the following equations for the ODE
\begin{eqnarray}\label{eq1test1}
\sum_{j=0}^n \lambda_j~{_0 D_{_t}}^{\!\!3/2}\phi(|t_i-t_j|)
+\sum_{j=0}^n \lambda_j\phi(|t_i-t_j|)=f(t_i)
\end{eqnarray}
for $i=2,\ldots n-1$,and the following equations for the initial conditions
\begin{eqnarray}\label{eq1test2}
&&\sum_{j=0}^n \lambda_j~\phi(|t_1-t_j|)=0,\\\label{eq1test3}
&&\sum_{j=0}^n \lambda_j~\phi'(|t_1-t_j|)=0.
\end{eqnarray}
Then (\ref{eq1test1})-(\ref{eq1test3}) lead to the following system of equations:
\begin{eqnarray*}
\left[\begin{array}{l}
{_0 D_{_t}}^{\!\!3/2}\boldsymbol{\phi}+\boldsymbol{\phi}\\
\boldsymbol{\phi}_1\\
\boldsymbol{\phi}'_1
\end{array}\right]
\left[\boldsymbol{\lambda}\right]=
\left[\begin{array}{l}
\boldsymbol{F}\\
0\\
0
\end{array}\right].
\end{eqnarray*}
The necessary matrices and vectors are
\begin{eqnarray*}
&&\boldsymbol{\phi}=\left(\phi(|t_i-t_j|)\right)_{2\leq i\leq n-1,1\leq j\leq n},\\
&&{_0 D_{_t}}^{\!\!3/2}\boldsymbol{\phi}=\left({_0 D_{_t}}^{\!\!3/2}\phi(|t_i-t_j|)\right)_{2\leq i\leq n-1,1\leq j\leq n},\\
&&\boldsymbol{\phi}_1=\left(\phi(|t_1-t_j|)\right)_{1\leq j\leq n},\\
&&\boldsymbol{\phi}'_1=\left(\phi'(|t_1-t_j|)\right)_{1\leq j\leq n},\\
&&\boldsymbol{\lambda}={(\lambda_j,1\leq j\leq n)}^T,\\
&&\boldsymbol{F}={(f(t_i),2\leq i\leq n-1)}^T.
\end{eqnarray*}
Now, we take $501$ points in the interval $0\leq t\leq 50$ and work with the powers RBF $\phi
(x)=x^3$ with $c=10^{-4}$. 
The numerical solutions are plotted with different right-hand side functions $f(t)=1,$ $f(t)=t\mbox{e}^{-t},$ and $f(t)=\mbox{e}^{-t}\sin(0.2t)$ in Figures \ref{test1fig1}, \ref{test1fig2}, and \ref{test1fig4}, respectively. The results are in agreement with the results of \cite{podlubny:1999}.
\subsection{Test problem 2}
Consider the following Riesz fractional differential equation \cite{yang-et-al}:
\begin{eqnarray}
\begin{array}{l}\label{test2}
\frac{\partial u(x,t)}{\partial t}=-K_\alpha\frac{\partial^\alpha}{\partial{|x|}^\alpha}u(x,t),\qquad x\in[0,\pi],~t\in(0,T],\\
u(x,0)=u_0(x),\\
u(0,t)=u(\pi,t)=0,
\end{array}
\end{eqnarray}
where $u$ is, for example, a solute concentration and $K_\alpha$ represents the dispersion coefficient.
Let $x_i,~1\leq i\leq n$ be the equidistant discretization points in the interval $[0,\pi]$ such that $x_1=0$ and $x_n=\pi$. Now, we construct the Lagrange basis $L_1(x),\ldots,L_n(x)$ of the span of the functions $\phi(|x-x_j|),~1\leq j\leq n$ via solving the system 
$$\boldsymbol{L}(x)=\boldsymbol{\phi}(x)\boldsymbol{A}^{-1},$$
where
$$
\begin{array}{rcl}
\boldsymbol{L}(x)
&=&
(L_j(x))_{1\leq j\leq n},\\
\boldsymbol{\phi}(x)
&=&
(\phi(|x-x_j|)_{1\leq j\leq n},\\
\quad\boldsymbol{A}
&=&
\left(\phi(|x_i-x_j|)\right)_{1\leq i\leq n,1\leq j\leq n}.
\end{array} 
$$
If $\mathcal{L}$ is a differential operator, and if the RBF $\phi$ is sufficiently smooth to to allow application of $\mathcal{L}$, the required derivatives $\mathcal{L}L_j$ of the Lagrange basis $L_j$ come via solving$$(\mathcal{L}L)(x)=(\mathcal{L}\boldsymbol{\phi})(x)\boldsymbol{A}^{-1}.$$
Due to the standard Lagrange conditions, the zero boundary conditions at $x_0=0$ and $x=\pi$ are satisfied if we use the span of the functions $L_2,\ldots,L_{n-1}$ as our trial space. Then the approximate solution can be written as 
\begin{eqnarray*}
{u}(x,t)=\displaystyle\sum_{j=2}^{n-1}\beta_j(t)L_j(x),
\end{eqnarray*}
with the unknown vector $$\boldsymbol{\beta}(t)=(\beta_j(t),2\leq j\leq n-1).$$
We now write the PDE at a point $x_i,~2\leq i\leq n-1$ as follows:
\begin{eqnarray*}
\displaystyle\sum_{j=2}^{n-1}\beta_j'(t)L_j(x_i)=-K_\alpha\displaystyle\sum_{j=2}^{n-1}\beta_j(t)\frac{\partial^\alpha}{\partial{|x|}^\alpha}L_j(x_i).
\end{eqnarray*}
The initial conditions also provide
\begin{eqnarray*}
\beta_j(0)=u_0(x_j),\quad 2\leq j\leq n-1.
\end{eqnarray*}
Thus we get the following system of ODEs
\begin{eqnarray*}
\boldsymbol{\beta}'(t)=-K_\alpha\left(\boldsymbol{\frac{\partial^\alpha}{\partial{|x|}^\alpha}}\boldsymbol{L}*\boldsymbol{\beta}(t)\right),
\end{eqnarray*}
with the initial conditions
\begin{eqnarray*}
\boldsymbol{\beta}(0)=U_0,
\end{eqnarray*}
where
\begin{eqnarray*}
&&\boldsymbol{\frac{\partial^\alpha}{\partial{|x|}^\alpha}}\boldsymbol{L}={\left(\frac{\partial^\alpha}{\partial{|x|}^\alpha}L_j(x_i)\right)}_{2\leq i\leq n-1,2\leq j\leq n-1},\\
&&\boldsymbol{U}_0={(u_0(x_i),2\leq i\leq n-1)}^T.
\end{eqnarray*}
Now, consider problem (\ref{test2}) with the parameters $\alpha=1.8,$ $K_\alpha=0.25$, $T=0.4$, and $u_0(x)=x^2(\pi-x)$. The numerical solution is plotted by using the Gaussian RBF with $c=1$, and taking $101$ discretization points, in Figure \ref{a18t4}. 
In the second experiment, we use parameters $\alpha=1.5,$ $K_\alpha=-0.25$, $T=0.5$, and $u_0(x)=\sin(4x).$ The numerical solution is plotted by using the Gaussian RBF with $c=1$, and taking $101$ discretization points, in Figure \ref{a15t5}.
The results are in agreement with the results of \cite{yang-et-al}. It should be noted that using the multiquadric RBF with $\beta=1$ and $c=1$ also gives the same results.
\section{Conclusion}
The Riemann-Liouville fractional integral and derivative and also the Caputo fractional derivative of the five kinds of RBFs including the powers, Gaussian, multiquadric, Matern and thin-plate splines, in one dimension, are obtained. 
These formulas allow to use new fractional variations of numerical methods based on RBFs. Two examples of such techniques are given. The first one is a fractional ODE which is solved by the RBF collocation method and the second one is a fractional PDE which is solved by the method of lines based on the spatial trial spaces spanned by the Lagrange basis associated to RBFs.
\bibliographystyle{elsarticle-num}


 \bigskip \smallskip

 \it

 \noindent
 Maryam Mohammadi\\
Kharazmi University, Tehran, Iran\\
e-mail: m.mohammadi@khu.ac.ir
~\\~\\
Robert Schaback\\
Institut f\"{u}r Numerische und 
Angewandte Mathematik\\
Universit\"{a}t G\"{o}ttingen\\ 
{Lotzestra\ss{}e} 
16-18, D-37073 G\"{o}ttingen, Germany\\[4pt]
e-mail: schaback@math.uni-goettingen.de
\newpage
\begin{table}
  \centering
   \caption{\footnotesize{Definition of some types of RBFs, where $\beta,$ $\nu,$ and $n$ are RBF parameters.}}\label{t1}
  \scriptsize{
\begin{tabular}{ccc}
  \hline
  Name & Definition\\
  \hline\\
  Gaussian & $\exp(-r^2/2)$\\\\
  Multiquadric & ${(1+r^2/2)}^{\beta/2}$\\\\
  Powers & $r^\beta$\\\\
  Matern/Sobolev & $K_\nu(r)r^\nu$\\\\
  Thin-plate splines & $r^{2n}\ln(r)$\\
 \hline
 \end{tabular}
}
\end{table}
\begin{figure}
 \center{ \includegraphics[width=250pt]{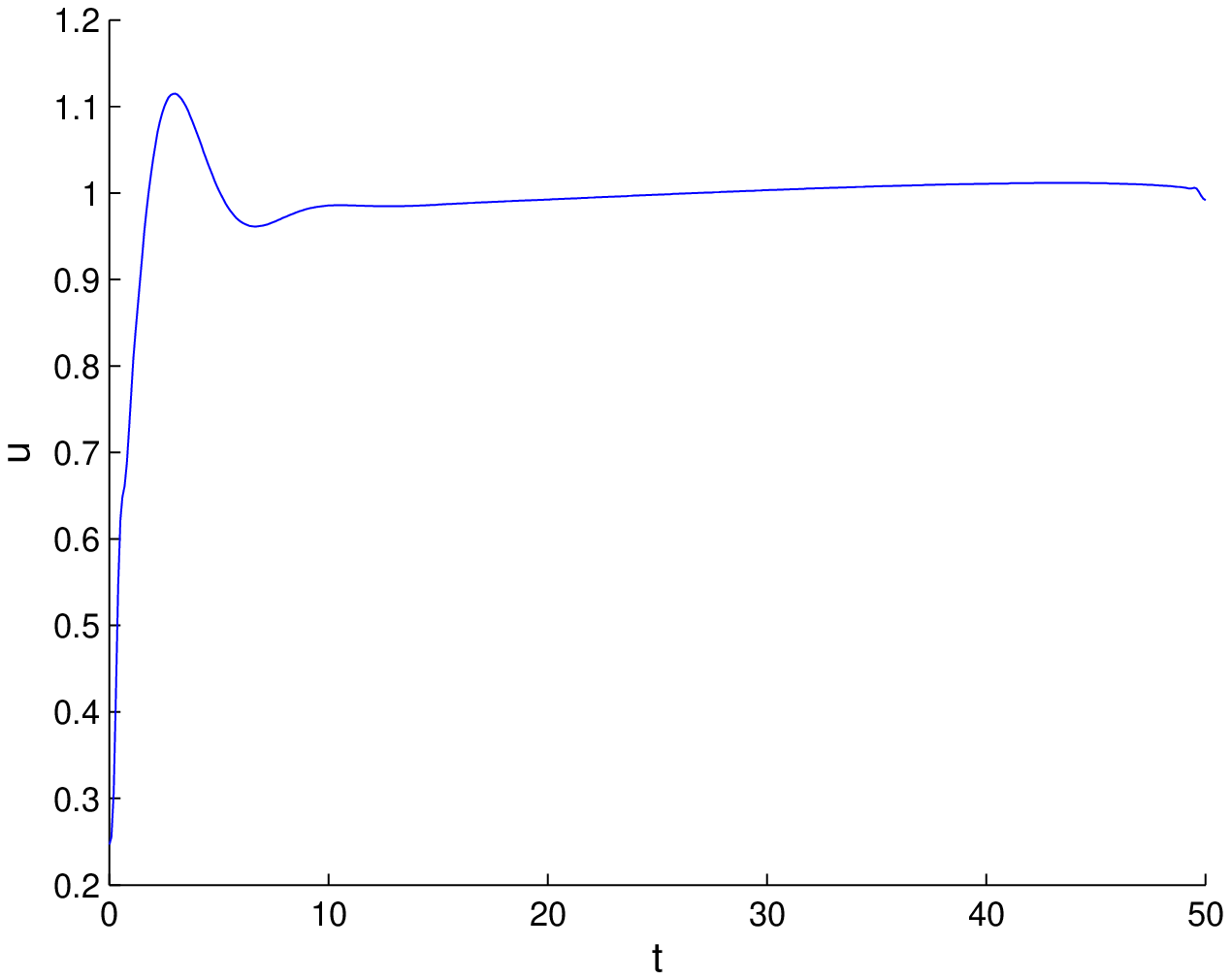}}
     \caption{Solution of the test problem 1 for $f(t)=1$.}\label{test1fig1}
\end{figure}
\begin{figure}
 \center{ \includegraphics[width=250pt]{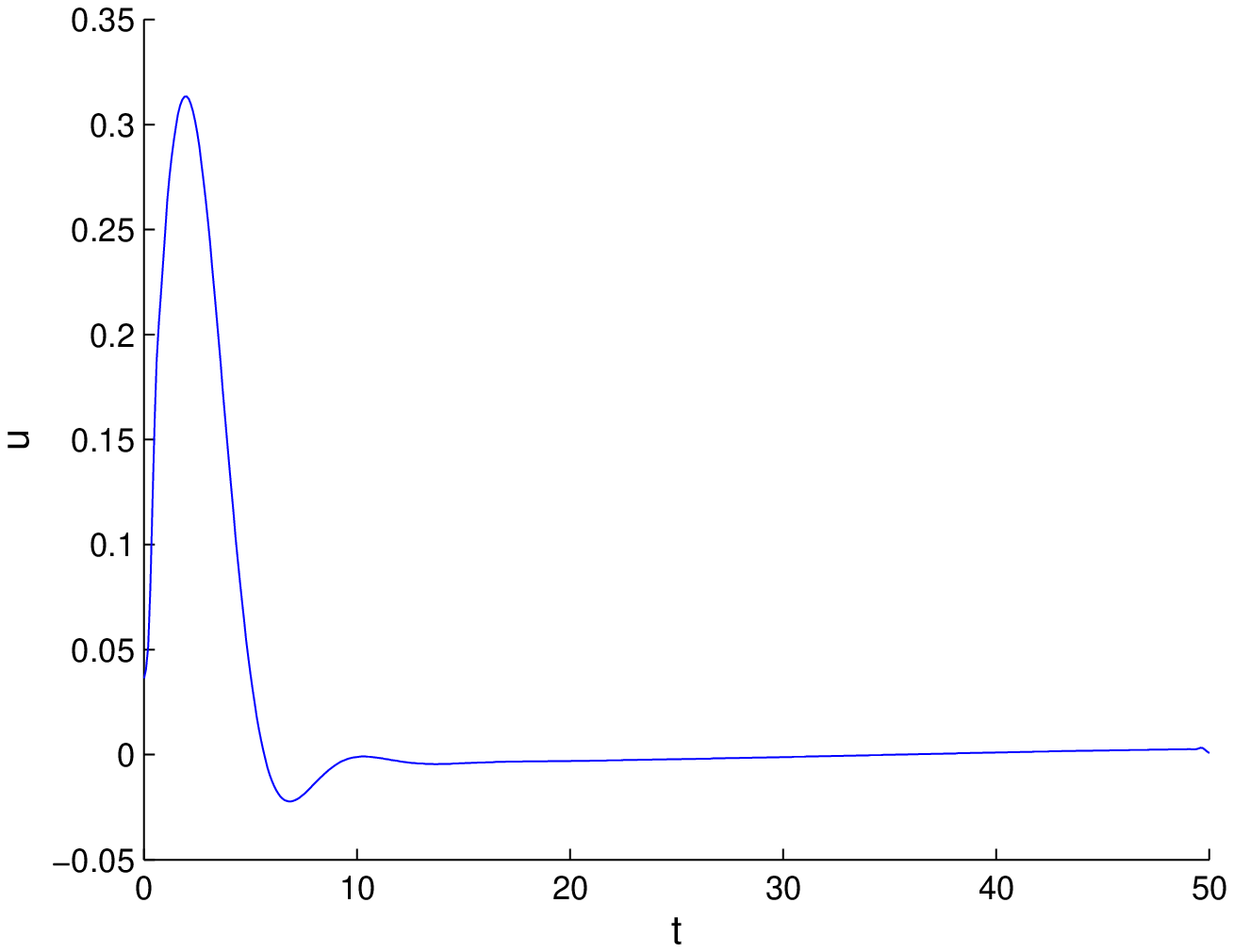}}
     \caption{Solution of the test problem 1 for $f(t)=t\mbox{e}^{-t}$.}\label{test1fig2}
\end{figure}
\begin{figure}
 \center{ \includegraphics[width=250pt]{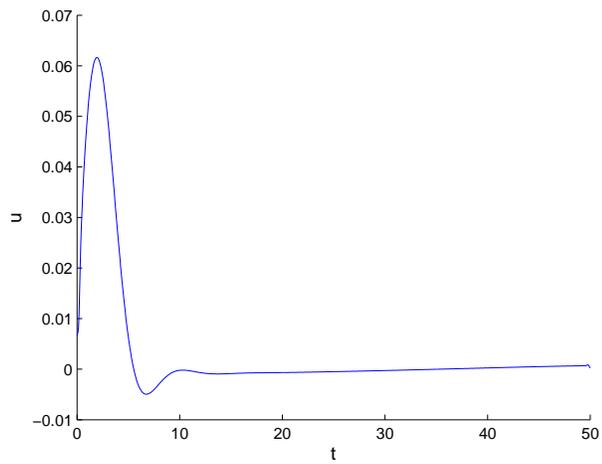}}
     \caption{Solution of the test problem 1 for $f(t)=\mbox{e}^{-t}\sin(0.2t)$.}\label{test1fig4}
\end{figure}
\begin{figure}
 \center{\includegraphics[width=250pt]{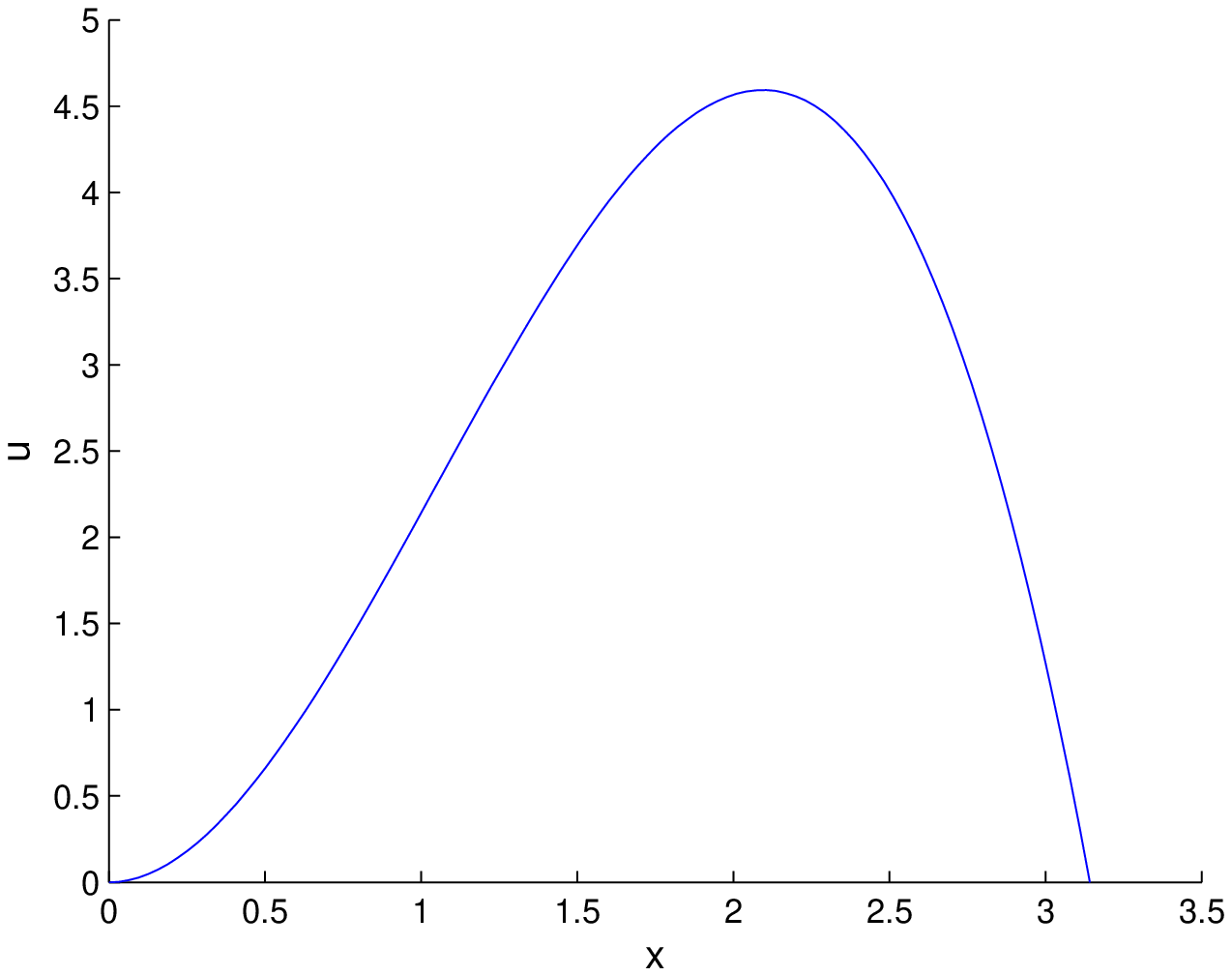}}
     \caption{Solution of the test problem 2 with  $\alpha=1.8,$ $K_\alpha=0.25,$ and $T=0.4$.}\label{a18t4}
\end{figure}
\begin{figure}
 \center{\includegraphics[width=250pt]{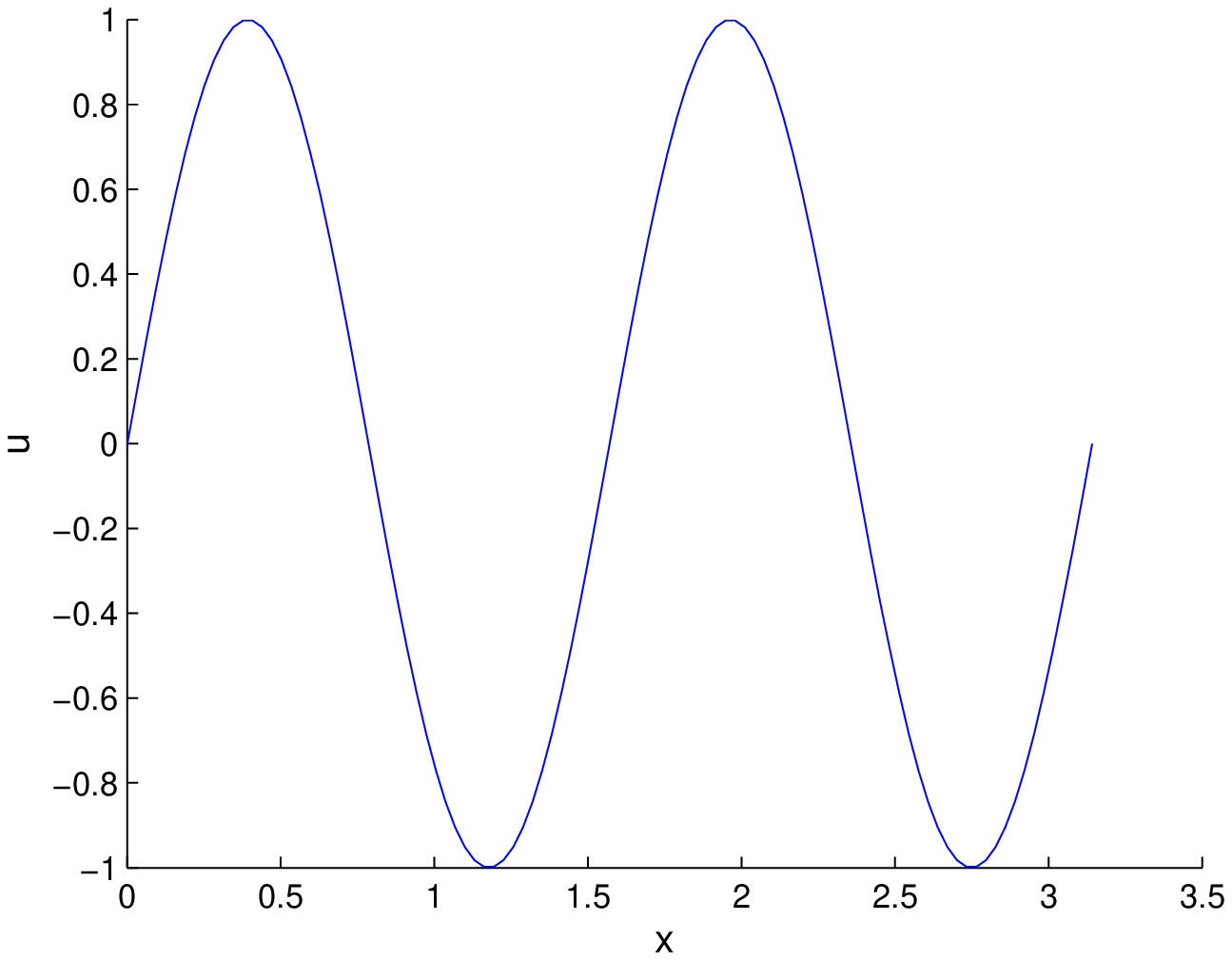}}
     \caption{Solution of the test problem 2 with  $\alpha=1.5,$ $K_\alpha=-0.25,$ and $T=0.5$.}\label{a15t5}
\end{figure}

\end{document}